\numberwithin{equation}{section}
\numberwithin{figure}{section}
\newtheorem{thm}{Theorem}[section]
\newtheorem{theorem}[thm]{Theorem}
\newtheorem{prop}[thm]{Proposition}
\newcommand{\beqn}{\begin{equation}}
\newcommand{\eeqn}{\end{equation}}
\def\R{{\mathbb R}}
\def\Db{{\overline{D}}}
\def\Dbar{{\overline{D}}}
\def\Bbar{{\overline{B}}}
\def\F{{\cal F}}
\def\L{{\cal L}}
\def\no{{\nabla_\omega}}
\def\nop{{\nabla_\omega^\perp}}
\def\bo{{b_\omega}}
\def\bop{{b_\omega^\perp}}
\newcommand{\cleq}{\preccurlyeq}
\def\cgeq{\succcurlyeq}
\def\pa{{\partial}}
\def\ep{\epsilon}
\def\nn{\nonumber}
\def\Bb{{\overline{B}}}
\def\xtot{{x,t; \omega, \tau}}
\def\ttxo{{t-\tau-x \cdot \omega}}
\def\xto{{x,t;\omega}}
\def\txo{{t-x \cdot \omega}}
\def\uacu{{\acute{u}}}
\def\ubar{{\bar{u}}}
\def\vacu{{\acute{v}}}
\def\vbar{{\bar{v}}}
\def\wacu{{\acute{w}}}
\def\aacu{{\acute{a}}}
\def\abar{{\bar{a}}}
\def\bacu{{\acute{b}}}
\def\bbar{{\bar{b}}}
\def\cacu{{\acute{c}}}
\def\cbar{{\bar{c}}}
\def\alphaacu{{\acute{\alpha}}}
\def\qacu{{\acute{q}}}
\def\qbar{{\bar{q}}}
\def\Lacu{{\acute{\L}}}
\def\phiacu{{\acute{\phi}}}
\def\psiacu{{\acute{\psi}}}
\title{
Stability for a formally determined inverse problem for a hyperbolic PDE with space and time dependent coefficients}
\author[1]{Venkateswaran P.\ Krishnan}
\author[2]{Rakesh}
\author[3]{Soumen Senapati\footnote{Corresponding author}}
\affil[1]{TIFR Centre for Applicable Mathematics, Sharada Nagar, Chikkabommasandra, Bangalore, Karnataka 560065, India, Email: {\tt vkrishnan@tifrbng.res.in}}
\affil[2]{Department of Mathematical Sciences, University of Delaware,
	501 Ewing Hall,
	Newark, DE 19716, USA, Email: {\tt rakesh@udel.edu}}
\affil[3]{TIFR Centre for Applicable Mathematics, Sharada Nagar, Chikkabommasandra, Bangalore, Karnataka 560065, India, Email: {\tt soumen@tifrbng.res.in}}
\date{}
\begin{document}

\maketitle

\begin{abstract}
We prove stability for a formally determined inverse problem for a hyperbolic PDE in one or higher space dimensions with the 
coefficients dependent on space and time variables. The hyperbolic operator has constant wave speed and we study the 
recovery of the 
first order and zeroth order coefficients. We use a modification of the Bukhgeim-Klibanov method to obtain our results. 
\end{abstract}


\def\tpurple{\textcolor{purple}}
\def\tred{\textcolor{red}}
\def\tblue{\textcolor{blue}}

\section{Introduction}

Suppose $D$ is a bounded domain in $\R^n$, $n \geq 1$, with a smooth boundary and $T>0$. Let 
$a(x,t), c(x,t)$ be smooth real valued functions on $\Db \times [0,T]$ and 
$b(x,t) = (b^1(x,t), \cdots, b^n(x,t))$ a smooth $n$-dimensional 
real vector field on $\Db \times [0,T]$. Define the hyperbolic operator
\begin{align}
	\L_{a,b,c} & := (\pa_t-a)^2  -  (\nabla - b)^2 + c
	\label{eq:Ldef1}
	\\
	& = \Box - 2a \pa_t + 2b \cdot \nabla + c - a_t + \nabla \cdot b + a^2 - b^2.
	\label{eq:Ldef2}
\end{align}
When it is clear from the context, we use $\L$ instead of $\L_{a,b,c}$.

Let $w(x,t)$ be the solution of the well-posed IBVP
\begin{align}
	\L_{a,b,c} w = 0, & \qquad (x,t) \in D \times [0,T]
	\label{eq:Wde}
	\\
	w(\cdot,0) =f, ~~ w_t(\cdot,0) =g, & \qquad \text{on } D
	\label{eq:Wic}
	\\
	w = h, & \qquad \text{on } \pa D \times [0,T]
	\label{eq:Wbc}
\end{align}
for $f,g,h$ with appropriate regularity.
For a given $a,b,c$, define the response operator
\begin{equation}\label{eq:lambda}
	\Lambda_{a,b,c} : (f,g,h) \to \left [ w(\cdot,T)|_D, w_t(\cdot,T)|_D, ~ \pa_\nu w|_{\pa D \times [0,T]} \right ];
\end{equation}
hence $\Lambda_{a,b,c}(f,g,h)$ represents the boundary and final time response, of the acoustic medium with 
acoustic properties $(a,b,c)$, to the initial boundary input $(f,g,h)$. 
So we have the forward map
\[
\Lambda: (a,b,c) \to \Lambda_{a,b,c},
\]
whose injectivity and stability has been studied by several authors.
This is an overdetermined problem (when $n>1$) because 
the distribution kernel of
$\Lambda$ depends on $2n$ parameters while $a,b,c$ depend on $n+1$ parameters.
Our goal is to study the recovery of $a,b,c$ from less (but slightly different) data than $\Lambda_{a,b,c}$ - we study a formally determined problem where the data depends only on $n+1$ parameters. 
Before we state our goal, we first describe what is known about the injectivity and stability of $\Lambda$ type forward maps.

In general, $\Lambda$ is not injective, due to gauge invariance (described later), and in such cases one hopes to recover curl$(a,b)$ and $c$ or one studies special cases when $a,b$ are known or $c$ is known. Below, injectivity and stability results for $\Lambda$ type forward maps
are to be understood in this sense. We use the term $\Lambda$ type 
forward maps because there are results in the literature with one or more of the following:
\vspace{-0.2in}
\begin{itemize}
	\item data is collected only on a part of the lateral boundary
	\item data is not collected on $t=T$
	\item there are no sources on $t=0$
	\item the data is the far field pattern in the frequency domain, which in some sense is equivalent to $\Lambda$ but with $t$ varying over $(-\infty, \infty)$
	\item the principal part of the operator is not the wave operator but a hyperbolic operator associated with a non-constant velocity or even a Lorentzian metric.
\end{itemize}
\vspace{-0.2in}
While the inverse problems associated with $\Lambda$ type forward maps are overdetermined problems, there are considerable challenges dealing with some of 
these problems, either because three coefficients are being determined simultaneously, or the data is given only on a part of the lateral boundary, or the wave velocity is non-constant.
The results we obtain are only for the constant velocity case, though for a formally determined problem. 

From domain of dependence arguments, it is clear that, for hyperbolic operators
with coefficients dependent on $x,t$ and measurements over a finite $t$ interval $[0,T]$, to recover the coefficients on $D \times [0,T]$ one needs sources on $D \times \{t=0\}$ and measurements on $D \times \{t=T\}$, in addition to the lateral boundary sources and measurements. So, for inverse
problems with coefficients dependent on $x$ and $t$, with sources only on the lateral boundary and receivers/measurements only on the lateral boundary, of the $x,t$ domain,
either one must know the coefficients in appropriate regions contiguous with $t=0$ and $t=T$, assume analyticity of the coefficients with respect to $t$, or have data from measurements over infinitely long $t$ intervals.
The situation is different when the principal part of the operator is not the wave operator (or coming from a Lorentzian metric) but the Schr\"odinger operator $i \pa_t + \Delta$ (infinite speed of propagation) or perhaps a fractional differential operator (a non-local operator). We do not describe the results for such operators.

For coefficients which depend on $x,t$, results on the injectivity of $\Lambda$ type forward maps, for data on infinite time intervals, may be found in, for example,
\cite{Ramm_Sjostrand_IP_wave_equation_potential_1991,Stefanov1987,Stefanov_Inverse_scattering_potential_time_dependent_1989,
	Salazar_time-dependent_first_order_perturbation_2013}. For the finite time interval case, the injectivity of $\Lambda$ type forward maps,
but with coefficients known in certain regions near $t=0$ and $t=T$ or analytic in $t$, results may be found in, for example,
\cite{Isakov_Completeness_Product_solutions_IP_1991, Ramm_Rakesh_Property_C_1991,
	Eskin_IHP_time-dependent_2007,
	Eskin_IP_hyperbolic_PDE_time-dependent_2017,
	Bellassoued_Ben_Uniqueness_IP_time_dependent_coefficient_2016,
	Bellassoued-BenAicha, 
	Kian_Damping_partial_data_2016, 
	Krishnan-Vashisth, 
	Stefanov-Yang, FIKO}.
The stability of $\Lambda$ has been studied extensively in, for example, 
\cite{ Stefanov_Uhlmann_Stability_estimate_hyperbolic_DN_Map_1998,
	Salazar_stability_first_order_2014,Bellassoued_Jellali_Yamamoto_Stability_estimate_Inverse_BVP_local_DN_Map_2008,
	senapati2020stability, BB, BenAicha2015stability}. 
The results mentioned here, for $x,t$ dependent coefficients, are for over-determined problems and the stability results, even for these over-determined problems, are of 
log-log type. There are better stability results for the Schr\"odinger operator (infinite speed of propagation) with Holder stability (but not Lipschitz stability), still for an over-determined problem - see \cite{KS}.

We do not survey results for $\Lambda$ type maps when the 
coefficients are independent of $t$ - no sources are needed on $t=0$ and no measurements are needed on $t=T$. 
A brief survey of such results may be found in 
\cite{KKLO}. Most of these results
use generalizations of the Boundary Control Method introduced by Belishev 
(see  \cite{Belishev_Recent_progress_BC_Method_2007, Belishev_BC_Method_2011}) 
or generalizations of geometric optics solutions for hyperbolic PDEs introduced in 
\cite{Rakesh_Symes_Uniqueness_1988}, which were themselves imitations of similar (but 
harder to construct) solutions for elliptic PDEs constructed by Sylvester and Uhlmann in \cite{Sylvester_Uhlmann_Calderon_problem_1987}.

We now describe results for formally determined inverse problems for hyperbolic PDEs.

For coefficients independent of $t$, there are uniqueness and stability results, for formally determined problems, based on the ideas introduced by Bukhgeim and Klibanov in \cite{Bukhgeuim_Klibanov_Uniqueness_1981} which had the first such results in dimension
$n>1$. See \cite{Bellassoued_Yamamoto_Carleman_book} for a survey of such results and an exposition of the significant modifications of the important ideas in  \cite{Bukhgeuim_Klibanov_Uniqueness_1981}. The only drawback of these results is that they require the {\em initial source} to be a positive (or negative) function throughout the domain (in $x$ space).
Rakesh and Salo, in \cite{RS1,rakesh2019fixed}, obtained uniqueness and stability for the case
$a=0, b=0$ (recover $c$), avoiding the use of positive initial sources, using instead the more natural incoming plane wave source, except one needed
data from two such experiments, corresponding to incoming plane waves coming from opposite directions. In \cite{ma2020fixed, MPS2020}, these ideas were extended to obtain similar results for
the operator with general $a,b,c$  or the operator associated with a 
Lorentzian metric (with restrictions).

We also note the work in \cite{lassas2020uniqueness}, on a coefficient recovery problem for a semilinear hyperbolic PDE, with the coefficient independent of $t$ and the data  consisting of a weighted average of lateral boundary measurements. This seems to be an under-determined inverse problem but the non-linearity of the PDE is crucial for this result. The article \cite{feizmohammadi-Kian2020} also contains a uniqueness (and reconstruction) result for a formally determined $a,b,c$ recovery problem with the coefficients dependent on $x$ and $t$. They use a single boundary source $h$, constructed as the infinite sum of a combination of sources, each generating a solution travelling along a ray for the hyperbolic PDE, and the rays associated with these solutions forming a dense subset of the $x,t$ domain. The challenge is to build the source $h$ so that the data from the $h$ source can be separated into the data contributions from the sources in the sum. We believe such a source $h$ on the lateral boundary
would have support consisting of the full lateral boundary. 

The articles \cite{RS1,rakesh2019fixed} were attempts at (and have come close to) solving the long-standing open {\em Fixed Angle Scattering} inverse problem. There are other long-standing formally determined open problems for hyperbolic PDEs 
(with coefficients independent of $t$) such as the {\em Back-scattering Problem}, where the results are much weaker than the result for the {\em Fixed Angle Scattering Problem}. We do not survey the results for these two problems as the introductions to
\cite{MPS2020, RS1, RU} have a good survey of the results.

We study a formally determined inverse problem with the coefficients $a,b,c$ dependent on $x,t$. We prove uniqueness (up to gauge) and Lipschitz stabilty using modifications of the ideas of Bukhgeim and Klibanov in \cite{Bukhgeuim_Klibanov_Uniqueness_1981}, of an idea in \cite{MPS2020} and our new idea for problems with coefficients dependent on $x,t$. Our results have one weakness - the problem must be posed in the full space
$\R^n \times (-\infty, T]$ and do not work for space-time cylinders with bounded bases such
as $D \times (-\infty, T]$

Let $B$ denote the open unit ball in $\R^n$, $n \geq 1$, $T>0$ and suppose
$a(x,t), b^i(x,t), c(x,t)$, $i=1, \cdots, n$ are compactly supported smooth functions on $\R^n \times \R$.
If $\omega$ is a unit vector in $\R^n$ and $\tau \in \R$, let $U(\xtot)$ be the solution of the IVP
\begin{align}
	\L U = 0, & \qquad \text{on } \R^n \times \R,
	\label{eq:Ude}
	\\
	U(\xtot) = H(t-\tau - x \cdot \omega), & \qquad x\in \R^n, ~ t \ll 0,
	\label{eq:Uic}
\end{align}
and let $V(\xtot)$ be the solution of the IVP
\begin{align}
	\L V = 0, & \qquad \text{on } \R^n \times \R,
	\label{eq:Vde}
	\\
	V(\xtot) = \delta(t-\tau - x \cdot \omega), & \qquad x \in \R^n, ~ t \ll 0.
	\label{eq:Vic}
\end{align}
So $U,V$ are the disturbances in the medium caused by two types of impulsive incoming plane waves.
Here $\tau$ is the time the incoming plane wave reaches the origin; $\tau$ may also be regarded as a time delay.
Given $T>0$, define the map
\[
\F : (a,b,c) \to [ U, U_t, V, V_t](x,T;\omega,\tau)|_{x \in \R^n, \omega \in \Omega, \tau \in (-\infty, T+1]}
\]
mapping the medium properties $(a,b,c)$ of the region $\R^n \times (-\infty,T]$, to the final time medium response, to incoming plane waves, 
coming from a finite set of directions $\omega $ in the finite set 
$\Omega$ of unit 
vectors in $\R^n$, with delays $\tau \in (-\infty,T+1]$. Our goal is to study the injectivity and stability of $\F$.
We note the data set for our inverse problem (associated with $\F$) 
depends on $n+1$ parameters and our unknown functions
$(a,b,c)$ depend on $n+1$ parameters; hence our problem is formally determined.

We introduce definitions used throughout the article.
Given a unit vector $\omega \in \R^n$, a $\tau \in \R$ and a $T>0$, we define the wedge shaped
region (see Figure \ref{fig:qwt})
\begin{figure}[h]
\begin{center}
\epsfig{file=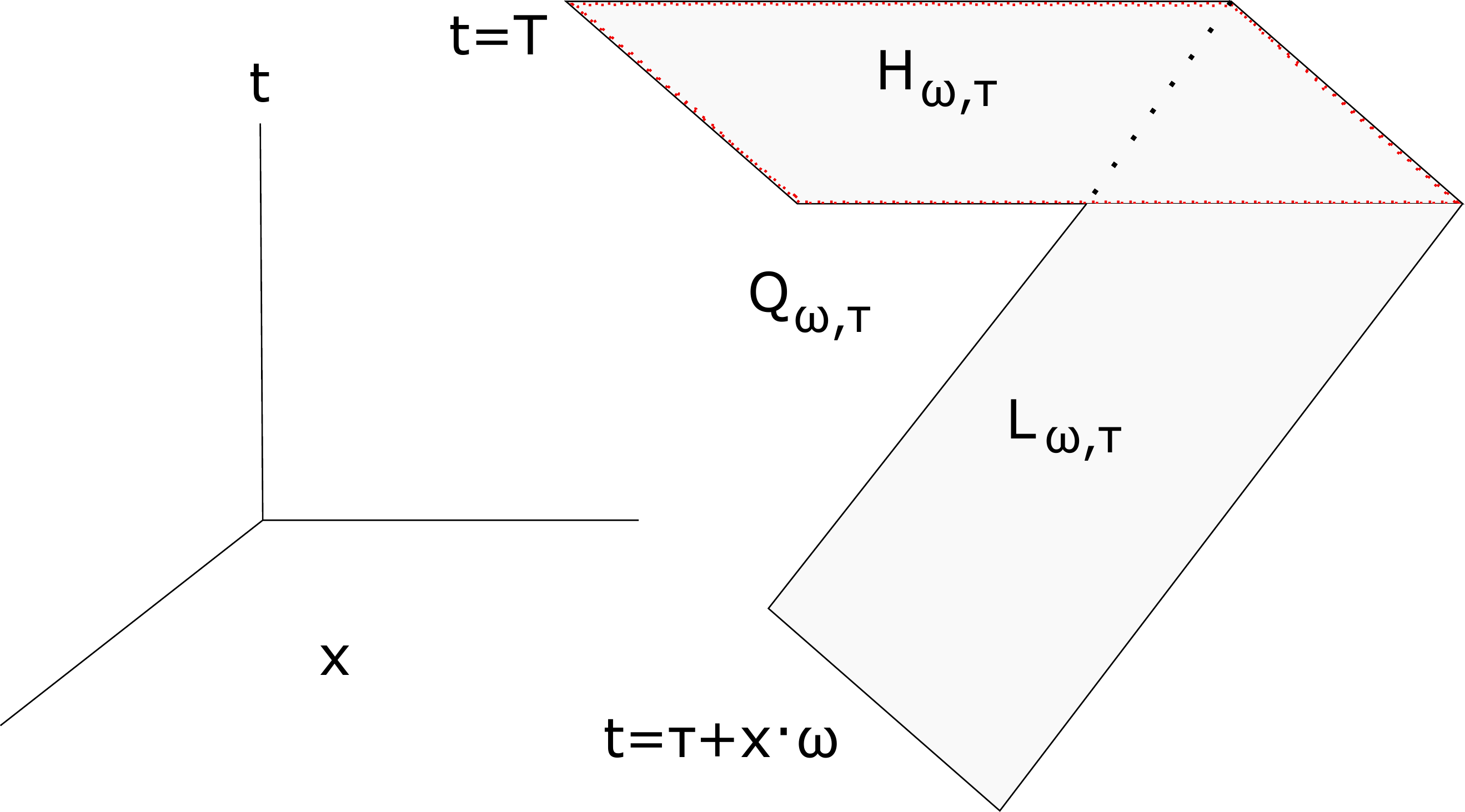, height=1.5in}
\end{center}
\caption{The wedge shaped region and its boundary}
\label{fig:qwt}
\end{figure}
\[
Q_{\omega, \tau} = \{(x,t) \in \R^n \times \R :  \tau + x \cdot \omega \leq t \leq T\}
\]
and its higher and lower boundary
\[
H_{\omega, \tau} = Q_{\omega,\tau} \cap \{t=T \}
\qquad 
L_{\omega,\tau} = Q_{\omega,\tau} \cap \{t= \tau + x \cdot \omega \}.
\]
We suppress the $T$ dependence of these sets as $T$ will not vary.
For any submanifold $M$ of $\R^n \times \R$ and a function $w$ on $M$ we define the weighted norms
\[
\|w\|_{1,M,\sigma} = \left ( \int_M e^{2 \sigma t} (|\nabla_M w|^2 + \sigma^2 |w|^2 ) \right)^{1/2},
\qquad 
\|w\|_{0,M,\sigma} = \left ( \int_M e^{2 \sigma t} |w|^2 \right)^{1/2},
\]
where $\nabla_M$ is the gradient on the manifold $M$ made up only of derivatives in directions 
tangential to $M$. We will also use $\|w\|_{1,M}$, $\|w\|_{0,M}$ for the standard $H^1$ and $L^2$ norms
on $M$.

Given compactly supported smooth functions $a,b^j,c$ on $\R^n \times \R$, we define the function
\beqn
\alpha(x,t;\omega) := \exp \left( \int_{-\infty}^0 (a+ \omega \cdot b)(x + s \omega, t + s) \, ds \right ),
\qquad (x,t) \in \R^n \times \R.
\label{eq:alphadef}
\eeqn
Note that $\alpha(x_0,t_0; \omega)$ is determined by the values of $a,b$ in the region $t \leq t_0$.

We start with the well-posedness of the IVP associated with $U$ and $V$. 
%
%
\begin{prop}[The Heaviside function solution]\label{prop:heaviside}
Suppose $a,b^i,c$, $i=1, \cdots, n$, are compactly supported smooth functions on $\R^n \times \R$, 
$\omega$ a unit vector in $\R^n$ and $\tau \in \R$. The IVP (\ref{eq:Ude}) - (\ref{eq:Uic}) has a unique distributional solution
\[
U(x,t;\omega,\tau) = u(x,t;\omega,\tau) H(t-\tau - x \cdot \omega), \qquad (x,t) \in \R^n \times \R
\]
where $u(x,t;\omega, \tau)$ is a smooth function in the region $t \geq \tau + x \cdot \omega$ and is the unique solution of the characteristic IBVP
\begin{align}
\L_{a,b,c} u =0, & \qquad x \in \R^n, ~ \tau + x \cdot \omega \leq t,
\label{eq:ude}
\\
u(\xtot) = \alpha(x,t; \omega), & \qquad x \in \R^n, ~ t = \tau + x \cdot \omega,
\label{eq:ucc}
\\
u(\xtot) =1, & \qquad   x \in \R^n, ~ \tau + x \cdot \omega \leq t \ll 0.
\label{eq:uic}
\end{align}
Further, given $T>0$ if $ \|[a,b,c]\|_{C^N(Q_{\omega, \tau})} \leq M$ for $N = 5 + [n/2]$ then 
\[
\| u\|_{C^3(Q_{\omega,\tau})} \leq C,
\]
where $C$ depends on $M$ and the support of $a,b,c$.
\end{prop}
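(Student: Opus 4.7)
The plan is to substitute the ansatz $U(x,t;\omega,\tau) = u(x,t;\omega,\tau) H(\phi)$, where $\phi(x,t) = t-\tau-x\cdot\omega$, into $\mathcal{L}U = 0$ and expand distributionally. Since $\phi$ is a characteristic phase ($(\partial_t\phi)^2 = |\nabla_x\phi|^2 = 1$), the most singular $\delta'(\phi)$ terms from $\Box(uH(\phi))$ cancel. Collecting the remaining distributions yields three conditions: (a) the coefficient of $\delta(\phi)$ gives the transport equation $(u_t + \omega\cdot\nabla_x u) = (a+\omega\cdot b) u$ on the characteristic hyperplane $\{\phi=0\}$; (b) the coefficient of $H(\phi)$ gives the bulk equation $\mathcal{L}u = 0$ in $\{\phi \geq 0\}$; (c) the past condition $U = H(\phi)$ for $t \ll 0$ forces $u \to 1$ as $t \to -\infty$. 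Along the bicharacteristic $s \mapsto (x+s\omega,\, \tau+x\cdot\omega+s)$ the transport equation becomes the scalar ODE $\dot{u} = (a+\omega\cdot b)u$; since $a$ and $b$ vanish for large negative $t$, integrating this ODE from $s=-\infty$ with $u=1$ gives $u|_{\phi=0}=\alpha(x,t;\omega)$ exactly as in \eqref{eq:alphadef}. This reduces the IVP for $U$ to the characteristic IBVP (\ref{eq:ude})--(\ref{eq:uic}).

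Next I would establish well-posedness of this characteristic IBVP. By compact support of $a,b,c$, there is $t_0 \ll 0$ such that for $t\le t_0$ one has $\mathcal{L}=\Box$ and $\alpha\equiv 1$, so $u\equiv 1$ is consistent in the past and acts as Cauchy data $u=1,\ u_t=0$ on $\{t=t_0,\ \phi\ge 0\}$, while $u=\alpha$ is prescribed on the characteristic part $\{\phi=0,\ t\ge t_0\}$. After rotating so that $\omega=e_n$, apply the change of variables $(y,s) = (x,\, t-\tau-x\cdot\omega)$, which straightens $\{\phi=0\}$ to $\{s=0\}$ and transforms $\Box$ into $-\Delta_{y'}+2\,\partial_{y_n}\partial_s$ (with $y'=(y_1,\dots,y_{n-1})$). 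In the new coordinates the problem becomes a noncharacteristic hyperbolic IBVP in the $y_n$-direction, for which existence, uniqueness, and finite-speed propagation follow from standard symmetric-hyperbolic theory (equivalently, one may view it as a Goursat-type problem solvable by Picard iteration, using Duhamel to absorb the first- and zeroth-order terms $-2a\partial_t + 2b\cdot\nabla + (c-a_t+\nabla\cdot b+a^2-b^2)$). Smoothness of $u$ in $\{\phi\ge 0\}$ follows because $\alpha$ is smooth (a smooth integral of smooth $a+\omega\cdot b$) and the Cauchy data $u=1,\ u_t=0$ are smooth; interior smoothness is then inherited from smoothness of the coefficients $a,b,c$.

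For the quantitative bound $\|u\|_{C^3(Q_{\omega,\tau})}\le C$, I would apply a standard higher-order energy estimate in the transformed coordinates. Differentiating the PDE $k$ times, pairing with an appropriate multiplier ($\partial_s u$-type, adapted to the hyperbolic direction), and integrating over the truncated wedge yields an $H^k(Q_{\omega,\tau})$ bound on $u$ controlled by the $H^k$ norm of the characteristic data $\alpha$ and the $C^k$ norms of the coefficients through a Grönwall argument in $t$. The trace $\alpha$ lies in $C^\infty$ with seminorms estimated by $\|[a,b]\|_{C^k}$ via the explicit integral in \eqref{eq:alphadef}. Choosing $k=4+[n/2]$ and applying the Sobolev embedding $H^{k}(\R^{n+1})\hookrightarrow C^3$, which requires $k>3+(n+1)/2$, yields the claimed bound in $C^3$ with the cost of one extra derivative on the coefficients (and one on the embedding), explaining the exponent $N=5+[n/2]$. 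The dependence of $C$ on the support of $a,b,c$ enters through the diameter of the region on which integration is performed and through the length of the characteristic rays over which $\alpha$ is accumulated.

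The main obstacle is step two: the initial surface $\{\phi=0\}$ is characteristic, so classical Cauchy theory does not apply directly and the naive domain of determinacy from the characteristic surface alone is degenerate. The remedy — supplementing with genuine Cauchy data from the far past (where $a,b,c$ vanish and $H(\phi)$ itself solves $\Box U=0$ exactly) and straightening the characteristic via the coordinate change above — is what turns the problem into a standard hyperbolic IBVP amenable to energy estimates.
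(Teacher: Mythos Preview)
Your proposal is correct and follows essentially the same approach as the paper: the distributional expansion of $\L(uH(\phi))$ yielding the transport equation on $\{\phi=0\}$ and $\L u=0$ in the bulk, followed by integrating the transport ODE along bicharacteristics to obtain $u|_{\phi=0}=\alpha$, matches the paper's computation exactly. The paper in fact gives less detail than you do---it only writes out the progressing wave expansion and defers existence, uniqueness, and the $C^3$ regularity bound to the analogous Proposition~1.1 of \cite{RS1}, whereas you sketch the characteristic-straightening and energy-estimate arguments directly.
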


A similar result is true for $V(x,t;\omega,\tau)$. 
%
\begin{prop}[The delta function solution]\label{prop:delta}
Suppose $a,b^i,c$, $i=1, \cdots, n$, are compactly supported smooth functions on $\R^n \times \R$, 
$\omega$ a unit vector in $\R^n$ and $\tau \in \R$. The IVP (\ref{eq:Vde}) - (\ref{eq:Vic}) has a unique distributional solution
\[
V(x,t;\omega,\tau) =  \alpha(\xtot) \, \delta(\ttxo) + v(\xtot) H(\ttxo), \qquad (x,t) \in \R^n \times \R
\]
where $v(x,t;\omega, \tau)$ is a smooth function on the region $t \geq \tau + x \cdot \omega$ and is the unique solution of the characteristic IBVP
\begin{align}
\L_{a,b,c} v &=0,  \qquad t \geq \tau + x \cdot \omega,
\label{eq:vde}
\\
v(x,t;\omega, \tau) & =0, \qquad t\ll 0
\label{eq:vic}
\\
v_t + \omega \cdot \nabla v - (a + \omega  \cdot b) v& = 
- \frac{1}{2} \L_{a,b,c} \alpha,
 \qquad t = \tau + x \cdot \omega.
\label{eq:vcc}
\end{align}
Further, given $T>0$ if $ \|[a,b,c]\|_{C^N(Q_{\omega, \tau})} \leq M$ for $N = 7 + [n/2]$ then 
\[
\| v\|_{C^3(Q_{\omega,\tau})} \leq C,
\]
where $C$ depends on $M$ and the support of $a,b,c$.
\end{prop}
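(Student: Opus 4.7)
The plan is to follow the same ansatz/WKB-type strategy used for Proposition \ref{prop:heaviside}, but one distributional order higher. I substitute
\[
V(\xtot) = \alpha(\xtot)\,\delta(\phi) + v(\xtot)\,H(\phi),\qquad \phi := \ttxo,
\]
into $\L V$ and use the distributional identities $\pa_t H(\phi) = \delta(\phi)$, $\pa_t \delta(\phi) = \delta'(\phi)$, $\nabla H(\phi) = -\omega\,\delta(\phi)$, together with the eikonal identity $\phi_t^2 - |\nabla \phi|^2 = 0$. After expanding, I collect terms by the order of the singularity in $\phi$, producing coefficients of $\delta''(\phi)$, $\delta'(\phi)$, $\delta(\phi)$, and $H(\phi)$, and set each to zero separately.

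Matching coefficients: the $\delta''(\phi)$ terms cancel identically thanks to the eikonal identity. The $\delta'(\phi)$ coefficient reduces to $2[\alpha_t + \omega\cdot\nabla\alpha - (a + \omega\cdot b)\alpha]$, which vanishes precisely because formula (\ref{eq:alphadef}) defines $\alpha$ as the solution of this transport equation along the direction $(1,\omega)$, starting from $\alpha \equiv 1$ in $t \ll 0$. The $\delta(\phi)$ coefficient, after simplification, reduces on $L_{\omega,\tau}$ to $2[v_t + \omega\cdot\nabla v - (a + \omega\cdot b) v] + \L_{a,b,c}\alpha$, yielding the characteristic boundary condition (\ref{eq:vcc}). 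The $H(\phi)$ coefficient is $\L_{a,b,c} v$, which must vanish in the interior $\phi > 0$, giving (\ref{eq:vde}). Finally, the requirement $V = \delta(\phi)$ for $t \ll 0$ forces $v \equiv 0$ in $t \ll 0$, which is (\ref{eq:vic}); $\alpha \equiv 1$ there is automatic from the formula.

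To complete the proof I must establish well-posedness and the $C^3$ bound for the characteristic IBVP (\ref{eq:vde})--(\ref{eq:vcc}). First, restricted to $L_{\omega,\tau}$, equation (\ref{eq:vcc}) is a first-order linear transport ODE for $v$ along $(1,\omega)$, a direction tangent to $L_{\omega,\tau}$, with smooth forcing $-\tfrac{1}{2}\L_{a,b,c}\alpha$ and vanishing data in $t \ll 0$. Integrating along this vector field determines $v|_{L_{\omega,\tau}}$ uniquely and smoothly, with $C^k$ norm controlled by $\|\L_{a,b,c}\alpha\|_{C^k(L_{\omega,\tau})}$ and hence by $\|[a,b,c]\|_{C^{k+2}}$. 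With $v|_{L_{\omega,\tau}}$ in hand, (\ref{eq:vde}) becomes a Goursat-type problem for $\L_{a,b,c} v = 0$ on $Q_{\omega,\tau}$. I would solve it in the null coordinates $(\phi, \psi, y')$ with $\psi = t + x\cdot\omega$ and $y'$ tangent to the hyperplane perpendicular to $\omega$, in which the principal part becomes $4\pa_\phi \pa_\psi - \Delta_{y'}$; a contraction-mapping/Picard iteration integrating in the $\phi$-direction from the data on $\phi = 0$, combined with standard energy estimates, yields existence, uniqueness and $H^s$ regularity on compact subsets of the wedge. Sobolev embedding $H^{3+[n/2]+1} \hookrightarrow C^3$ then delivers the stated bound, the two-derivative loss relative to Proposition \ref{prop:heaviside} (i.e.\ $N = 7 + [n/2]$ rather than $5 + [n/2]$) accounting exactly for the two derivatives of $a + \omega \cdot b$ sitting inside $\L_{a,b,c}\alpha$ on the right of (\ref{eq:vcc}).

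The main obstacle I anticipate is the Goursat step itself: characteristic data is prescribed on only one null hyperplane rather than two intersecting ones, so one must argue that the past-vanishing condition together with the compact support of $a, b, c$ effectively supplies the missing characteristic datum and closes the energy estimate on the unbounded wedge $Q_{\omega,\tau}$. A secondary bookkeeping issue is checking that the constants in the final $C^3$ bound depend only on $M$ and on the supports of $a, b, c$ as claimed, uniformly in $\omega$ and $\tau$.
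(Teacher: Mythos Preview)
Your proposal is correct and follows essentially the same progressing-wave expansion as the paper: the paper also writes $V = f\,\delta(\phi) + v\,H(\phi)$, uses the identity (\ref{eq:LF}) (which already encodes your $\delta''$ cancellation via the eikonal relation) to read off the transport equation forcing $f=\alpha$ and the characteristic condition (\ref{eq:vcc}) for $v$, and then defers the well-posedness and $C^3$ bound for the Goursat problem to the argument of Proposition~1.1 in \cite{RS1} rather than sketching it in detail as you do.
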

While $V = -\pa_\tau U$, the relationship between $u$ and $v$ may be a little more complicated
because the domains of $u,v$ depend on $\tau$.

The inverse problem has a gauge invariance. If $\phi(x,t)$ is a smooth function on $\R^n \times \R$ then, 
for any smooth function $f(x,t)$ on $\R^n \times \R$, we have
\beqn
(\pa_t -a - \phi_t )(e^\phi f) = e^{\phi} (\pa_t - a) f,
\qquad (\nabla - b - \nabla \phi) (e^\phi f) = e^{\phi} (\nabla - b)f
\label{eq:gauge1}
\eeqn
implying
\beqn
\L_{a + \phi_t, b+ \nabla \phi, c} (e^\phi f) = e^\phi \L_{a,b,c} f;
\eeqn
in particular
\[
\L_{a + \phi_t, b+ \nabla \phi, c} (e^\phi U) = e^\phi \L_{a,b,c} U=0,
\qquad
\L_{a + \phi_t, b+ \nabla \phi, c} (e^\phi V) = e^\phi \L_{a,b,c} V =0,
\]
Hence, if $\phi$ is compactly supported  then $e^\phi U$ and $e^{\phi}V$ are the Heavisde function and delta function solutions
corresponding to the triple $(a+ \phi_t, b + \nabla \phi, c)$. So, if we also have $\phi(\cdot,T) =0$, then
\[
\F(a,b,c) = \F(a+ \phi_t, b + \nabla \phi, c).
\]
Actually our data on $t=T$ will also involve time derivatives of $U,V$ so, for gauge invariance, we will also
need some time derivatives of $\phi$ to be zero at $t=T$. We will be specific below.

We state our principal results next. We have seen in (\ref{eq:Ldef2}) that $\L_{a,b,c}$ can also be written in 
the form
\[
\L_{a,b,c} = \Box - 2a \pa_t + 2 b \cdot \nabla + q
\]
where
\beqn
q := c - a_t + \nabla \cdot b + a^2 - b^2.
\label{eq:Ldef}
\eeqn
We can regard the operator $\L_{a,b,c}$ as determined by the functions $a, b^i, c$ or by the functions
$a, b^i, q$. We use both points of view below - the context will clarify the point of view in play.

Our work has two new ideas, perhaps one more significant than the other. Our most significant idea
allows us to obtain Lipschitz stability for a formally determined 
$x,t$ dependent coefficient problem as compared to the logarithmic stability results for overdetermined 
problems (though on bounded domains) in the literature.
This is showcased in its simplest form in the study of the less
complicated problem of recovering $q$ given $a,b$. Our second idea is about separating the estimates 
on $c$ from the estimates on $a,b$ when we prove stability for the $a,b,c$ problem.

We start with the stability result about recovering $q$, given $a,b$. 
\begin{theorem}[Stability for the $q$ recovery problem, given $a,b$]\label{thm:qstability}
Suppose $T>0$ and $a(x,t), b^i(x,t)$, $i=1, \cdots,n$,
 are compactly supported smooth functions on $\R^n \times [0,T]$ and
$\omega$ is a unit vector in $\R^n$.
If $q, \qacu$ are compactly supported smooth functions on $\R^n \times (-\infty,T]$ with support
in $\Bbar \times [0,T]$ and $\|[q, \qacu, a,b]\|_{C^{7+[n/2]}} \leq M$ then
\[
\| q - \qacu \|_{L^2} 
\cleq 
\int_{-1}^{T+1} \| (v - \vacu)(\cdot, T; \omega, \tau) \|_{1, H_{\omega, \tau}}
+   \| (v_t - \vacu_t)(\cdot, T; \omega, \tau) \|_{0, H_{\omega, \tau}}
~ d \tau.
\]
Here $v, \vacu$ are the functions associated with $(a,b,q)$ and $(a,b,\qacu)$ in Proposition \ref{prop:delta} and the constant depends on $M$ and 
the support of $a,b,q, \qacu$.
\end{theorem}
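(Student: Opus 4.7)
The plan is to set $w := v - \vacu$ and $p := q - \qacu$, reducing the stability estimate for $p$ to a Carleman argument on each wedge $Q_{\omega,\tau}$ together with the observation that the family of null sheets $\{L_{\omega,\tau}\}_{\tau}$ foliates $\R^{n+1}$. Since $a,b$ are common to the two triples, subtracting (\ref{eq:vde})--(\ref{eq:vcc}) from their primed counterparts and using $\L_{a,b,q}\alpha - \L_{a,b,\qacu}\alpha = p\,\alpha$ yields
\begin{align*}
\L_{a,b,q} w &= -p\,\vacu \qquad \text{on } Q_{\omega,\tau}, \\
w_t + \omega \cdot \nabla w - (a + \omega \cdot b)\,w &= -\tfrac{1}{2}\,p\,\alpha \qquad \text{on } L_{\omega,\tau},
\end{align*}
with $w$ vanishing for $t \ll 0$ and $\alpha$ bounded away from zero. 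The second equation is the structural workhorse: since $\pa_t + \omega \cdot \nabla$ is tangent to $L_{\omega,\tau}$, one recovers $p$ on $L_{\omega,\tau}$ from $w$ and a single tangential derivative,
\[
|p(x,t)|^2 \;\lesssim\; \bigl|(\pa_t + \omega \cdot \nabla) w(x,t)\bigr|^2 + |w(x,t)|^2 \qquad \text{pointwise on } L_{\omega,\tau}.
\]

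The key new idea is to convert this pointwise identity on each sheet into a global estimate by sweeping over $\tau$. As $\tau$ ranges over $[-1,T+1]$, the sheets $L_{\omega,\tau}$ foliate a neighbourhood of $\text{supp}(p) \subset \Bbar \times [0,T]$, and the change of variables $\tilde\tau = \tau + x\cdot\omega$ turns the integral over the sheets into the full $L^2$ norm on $\R^n \times \R$,
\[
\|p\|_{L^2}^2 \;\lesssim\; \int_{-1}^{T+1} \int_{L_{\omega,\tau}} |p|^2 \, dS \, d\tau \;\lesssim\; \int_{-1}^{T+1} \|w(\cdot,\cdot\,;\omega,\tau)\|_{H^1(L_{\omega,\tau})}^2 \, d\tau.
\]
The theorem is thereby reduced to bounding $\|w\|_{H^1(L_{\omega,\tau})}$ by the data on $H_{\omega,\tau}$, uniformly in $\tau$.

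For this reduction I would apply a Carleman estimate for $\L_{a,b,q}$ on the wedge $Q_{\omega,\tau}$ with an increasing-in-time weight $e^{\sigma t}$, which transfers the main boundary contribution to the future spacelike face $H_{\omega,\tau}$ and delivers, schematically,
\[
\sigma\,\|w\|_{1,L_{\omega,\tau},\sigma}^2 \;\lesssim\; \|e^{\sigma t}\,\L w\|_{L^2(Q_{\omega,\tau})}^2 + e^{2\sigma T}\bigl(\|w\|_{1,H_{\omega,\tau}}^2 + \|w_t\|_{0,H_{\omega,\tau}}^2\bigr).
\]
Substituting $\L w = -p\,\vacu$ and using the $C^0$ bound on $\vacu$ from Proposition~\ref{prop:delta} replaces the first right-hand term by $C\|e^{\sigma t}p\|_{L^2(Q_{\omega,\tau})}^2$. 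Integrating in $\tau$, applying Fubini on the interior term (each $(x,t)$ sits in a $\tau$-fibre of length at most $T+2$), and inserting the foliation inequality from the previous paragraph produces
\[
\sigma\,\|e^{\sigma t}p\|_{L^2}^2 \;\lesssim\; \|e^{\sigma t}p\|_{L^2}^2 + e^{2\sigma T}\!\int_{-1}^{T+1}\!\bigl(\|(v-\vacu)(\cdot,T;\omega,\tau)\|_{1,H_{\omega,\tau}}^2 + \|(v_t-\vacu_t)(\cdot,T;\omega,\tau)\|_{0,H_{\omega,\tau}}^2\bigr)d\tau.
\]
Fixing $\sigma$ large enough to absorb the first right-hand term, and using $\text{supp}(p) \subset \{0\le t \le T\}$ to discard the weight on the left, yields Lipschitz stability of $\|p\|_{L^2}$ by the data; conversion to the $L^1_\tau$ form displayed in the theorem uses the uniform a priori bounds on $v,\vacu,v_t,\vacu_t$ on $H_{\omega,\tau}$ supplied by Proposition~\ref{prop:delta}.

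The main obstacle is establishing the Carleman estimate on $Q_{\omega,\tau}$ with the null past face $L_{\omega,\tau}$: the classical pseudoconvexity condition degenerates on characteristic surfaces, so the weight and the boundary integrations by parts must be arranged so that the specific null tangential combination $\pa_t + \omega \cdot \nabla$ --- exactly the one appearing in the characteristic condition (\ref{eq:vcc}) --- acquires a positive coefficient on $L_{\omega,\tau}$ strong enough to dominate the interior $|p|^2$ term after the $\tau$-integration. Once such an estimate is in place, the foliation, Fubini, and absorption steps above are essentially routine under the regularity bound $\|[q,\qacu,a,b]\|_{C^{7+[n/2]}} \le M$.
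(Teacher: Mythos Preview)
Your proposal is correct and follows essentially the same route as the paper: derive the difference system for $\vbar$, apply the Carleman estimate of Proposition~\ref{prop:carl} on each wedge $Q_{\omega,\tau}$ to control $\|\vbar\|_{1,\sigma,L_{\omega,\tau}}$ (hence $\|\qbar\|_{0,\sigma,L_{\omega,\tau}}$ via the characteristic transport identity), integrate in $\tau$ using the foliation by the sheets $L_{\omega,\tau}$ to convert the left side into $\|\qbar\|_{0,\sigma,\Bbar\times[0,T]}^2$, bound the interior $\|\qbar\|_{0,\sigma,Q_\tau}^2$ term by Fubini, and absorb for large $\sigma$. Your explicit remark about passing from the $L^2_\tau$-squared data bound to the $L^1_\tau$ form stated in the theorem (via the a priori $C^3$ bounds on $v,\vacu$ from Proposition~\ref{prop:delta}) is a detail the paper leaves implicit.
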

The proof of this theorem presents one of our ideas, uncluttered by the complications appearing in the proofs of the other theorems.

Next we state a stability result about recovering $a,b$ if $q$ is known. Note there is no gauge invariance if 
$q$ is known. Below $e^1, \cdots, e^n$ is the standard basis for $\R^n$.
%
\begin{theorem}[Stability for the $a,b$ recovery problem, given $q$]\label{thm:abstability}
Suppose $T>0$ and $q(x,t)$ is a smooth compactly supported smooth functions on $\R^n \times [0,T]$.
If $a,b, \aacu, \bacu$ are compactly supported smooth functions on $\R^n \times (-\infty,T]$ with support
in $\Bbar \times [0,T]$ and $\|[a,b, q, \aacu, \bacu, \qacu]\|_{C^{7+[n/2]}} \leq M$ then
\begin{align*}
\| [a-\aacu, b - \bacu] \|_{L^2} 
& \cleq 
\sum_\omega
\int_{-1}^{T+1} \| (u - \uacu)(\cdot, T;  \omega, \tau) \|_{1, H_{\omega, \tau}}
 +    \| (u_t - \uacu_t)(\cdot, T;  \omega, \tau) \|_{0, H_{\omega, \tau}}
\, d \tau
\end{align*}
where  $\omega$ takes the values $-e^n$ and $e^1, \cdots, e^n$.
Here $u, \uacu$ are the functions associated with
$(a,b,q)$ and $(\aacu, \bacu,q)$ in Proposition \ref{prop:heaviside}. The constant depends on $M$ and
the supports of $a,b, \aacu, \bacu, q$.
\end{theorem}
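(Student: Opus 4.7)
The plan is to adapt the argument for Theorem \ref{thm:qstability}, now with first-order unknowns $A := a-\aacu$, $B := b-\bacu$, and to use the $n+1$ incoming directions $\omega \in \{-e^n, e^1, \ldots, e^n\}$ to decouple the scalar $A$ from the components $B^1, \ldots, B^n$. Setting $W(x,t;\omega,\tau) := (u - \uacu)(x,t;\omega,\tau)$ on $Q_{\omega,\tau}$, since $q$ is common the difference $\L_{a,b,q} - \L_{\aacu,\bacu,q}$ is a first-order operator, and one computes
\[
\L_{a,b,q} W = 2A\,\uacu_t - 2B\cdot\nabla\uacu \text{ on } Q_{\omega,\tau}, \qquad W\big|_{L_{\omega,\tau}} = \alpha - \alphaacu.
\]
Using $(\pa_t + \omega\cdot\nabla)\alpha = (a+\omega\cdot b)\alpha$ (and similarly for $\alphaacu$), differentiating $W$ tangentially to $L_{\omega,\tau}$ along the null direction $(\omega,1)$ gives
\[
(\pa_t + \omega\cdot\nabla) W\big|_{L_{\omega,\tau}} = (A+\omega\cdot B)\,\alphaacu + (a+\omega\cdot b)\,W\big|_{L_{\omega,\tau}},
\]
so controlling the weighted $H^1$ norm of $W$ on $L_{\omega,\tau}$ controls $\|A+\omega\cdot B\|_{0,L_{\omega,\tau},\sigma}$, as $\alphaacu$ is bounded below by a positive constant.

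Next I apply, in each wedge, the weighted energy/Carleman identity underlying the proof of Theorem \ref{thm:qstability} --- multiply $\L W$ by a multiplier such as $e^{2\sigma t}(W_t - \omega\cdot\nabla W)$ and integrate by parts over $Q_{\omega,\tau}$ --- to obtain, for $\sigma$ large,
\[
\|W\|_{1,L_{\omega,\tau},\sigma}^2 \lesssim \|W\|_{1,H_{\omega,\tau},\sigma}^2 + \|W_t\|_{0,H_{\omega,\tau},\sigma}^2 + \|A\,\uacu_t - B\cdot\nabla\uacu\|_{0,Q_{\omega,\tau},\sigma}^2.
\]
Because $\{L_{\omega,\tau} : \tau \in [-1, T+1]\}$ foliates a space-time region containing the support of $A, B$, integration in $\tau$ produces a weighted $L^2(\R^n \times [0,T])$ bound on $A + \omega\cdot B$ by the $\tau$-integral of the data norms on $H_{\omega,\tau}$, plus a lower-order term of the form $C\sigma^{-2}\|(A,B)\|_{0,\sigma}^2$ (using that $\uacu$-derivatives are uniformly bounded by Proposition \ref{prop:heaviside}); this term is absorbed on the left for $\sigma$ large. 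Summing over the $n+1$ directions and inverting the pointwise linear system $\{A + \omega\cdot B\}_{\omega}$ for $(A, B^1, \ldots, B^n)$ yields a weighted $L^2$ bound on $(A, B)$; dropping the weights (using compact $t$-support) and passing from $L^2(d\tau)$ to $L^1(d\tau)$ over the bounded interval $[-1, T+1]$ gives the stated estimate.

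The main obstacle is the weighted energy identity in the wedge $Q_{\omega,\tau}$ whose lower boundary $L_{\omega,\tau}$ is a null (characteristic) surface: the multiplier and weight must be chosen so that the boundary contribution on $L_{\omega,\tau}$ produces the full weighted $H^1$ norm of $W$ (and not a degenerate one missing the transverse derivative $\pa_t - \omega\cdot\nabla$), and so that the source term, involving $A, B$ multiplied by bounded derivatives of $\uacu$, admits a $\sigma^{-2}$ absorption. This is precisely the ``most significant new idea'' the authors highlight for Theorem \ref{thm:qstability}, and it transfers here essentially unchanged; the genuinely new ingredients for Theorem \ref{thm:abstability} are the tangential-derivative identity on $L_{\omega,\tau}$, which converts the line-integral characteristic datum into pointwise values of $A+\omega\cdot B$, and the linear-algebra decoupling using the $n+1$ directions.
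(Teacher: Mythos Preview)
Your proposal is correct and follows essentially the same route as the paper: derive the difference PDE $\L \ubar = 2\abar\,\uacu_t - 2\bbar\cdot\nabla\uacu$ with $\ubar = \alpha-\alphaacu$ on $L_{\omega,\tau}$, apply the Carleman estimate of Proposition~\ref{prop:carl} on the wedge, use the tangential identity $(\pa_t+\omega\cdot\nabla-(a+\omega\cdot b))(\alpha-\alphaacu)=(\abar+\omega\cdot\bbar)\alphaacu$ to extract $\abar+\omega\cdot\bbar$ on $L_{\omega,\tau}$, integrate in $\tau$ to pass to $\Bbar\times[0,T]$, sum over $\omega\in\{-e^n,e^1,\dots,e^n\}$ to invert the linear system, and absorb the volume term by choosing $\sigma$ large. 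Two small remarks: the absorption gain is $\sigma^{-1}$, not $\sigma^{-2}$ (the Carleman estimate gives $\sigma\|\ubar\|_{1,\sigma,L}^2 \cleq \|[\abar,\bbar]\|_{0,\sigma,Q}^2 + \sigma\,\text{data}$), and your worry about the transverse derivative on $L_{\omega,\tau}$ is unnecessary---Proposition~\ref{prop:carl} yields only the \emph{tangential} gradient $\nabla_L$ on the characteristic face, but $\pa_t+\omega\cdot\nabla$ is tangential to $L_{\omega,\tau}$, which is exactly what you need.
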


Next we have a uniqueness result about recovering $(a,b,c)$. Noting the gauge invariance mentioned earlier
in the introduction, the most we can hope to recover is curl$(a,b)$ and $c$. However, for $\phi(x,t)$ to 
be a gauge we needed  $\phi(\cdot, T)=0$ (and $\phi_t(\cdot,T)=0$ because of the data we use in our theorems) - this is reflected in the hypothesis 
of the next theorem.
%
\begin{theorem}[Uniqueness for the curl$(a,b)$ and $c$ recovery problem]\label{thm:abcunique}
Suppose $T>0$ and $a,b,c,  \aacu, \bacu, \cacu$ are compactly supported smooth functions on 
$\R^n \times (-\infty,T]$ with support in $\Bbar \times [0,T]$.  If
\begin{align*}
[u, u_t](x,T,\omega, \tau) = [\uacu, \uacu_t](x,T;\omega, \tau),
		& \qquad \forall x \in H_{\omega,\tau}, ~ \tau \in [-1,T+1],  ~\omega = e^i,\pm e^n, ~ i=1, \cdots,  n-1,
\\
[v, v_t](x,T, e^n, \tau) = [\vacu, \vacu_t](x,T; e^n, \tau),
& \qquad \forall x \in H_{\omega,\tau}, ~ \tau \in [-1,T+1], 
\end{align*}
and
\begin{align*}
\int_{-\infty}^T (a + b^n)( x + s e^n, s) \, ds & = \int_{-\infty}^T (\aacu + \bacu^n)( x + s e^n, s) \, ds,
\qquad \forall x \in \R^n
\\
\int_{-\infty}^T (a_t + b_t^n)( x + s e^n, s) \, ds & = \int_{-\infty}^T (\aacu_t + \bacu_t^n)( x + s e^n, s) \, ds,
\qquad \forall x \in \R^n,
\end{align*}
then
\[
c= \cacu, \qquad d \left ( a dt + \sum_{i=1}^n b^i dx^i  \right) 
= d \left( \aacu dt + \sum_{i=1}^n \bacu^i dx^i \right ).
\]
Here $u, v, \uacu, \vacu$ are the functions associated with
$(a,b,c)$ and $(\aacu, \bacu, \cacu)$ in Propositions \ref{prop:heaviside} and \ref{prop:delta}.
\end{theorem}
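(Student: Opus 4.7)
\medskip
\noindent\textbf{Proof plan.} The plan is to use the two compatibility conditions to perform a preliminary gauge reduction, and then to run a coupled Bukhgeim--Klibanov argument that merges the ideas underlying Theorems~\ref{thm:qstability} and \ref{thm:abstability} to force pointwise equality of all the coefficients.

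First, I would build the gauge. Let $\psi := (a+b^n)-(\aacu+\bacu^n)$ and let $\phi$ be the unique solution on $\R^n\times\R$ of the transport equation $(\pa_t+\pa_{x^n})\phi=\psi$ with terminal condition $\phi(\cdot,T)=0$, explicitly
\[
\phi(x,t) \;=\; -\int_t^T \psi\bigl(x+(s-t)e^n,\,s\bigr)\,ds.
\]
The first compatibility hypothesis is precisely $\int_{-\infty}^T\psi(x+se^n,s)\,ds=0$, which together with the compact support of $a,b,\aacu,\bacu$ makes $\phi$ compactly supported (it goes to $0$ as $t\to -\infty$). A short integration-by-parts of $\pa_t\psi$ along the $(1,e^n)$-direction, combined with the first condition, identifies the second compatibility hypothesis with $\psi(\cdot,T)\equiv 0$; together with $\phi(\cdot,T)=0$ and the transport equation this yields $\phi_t(\cdot,T)\equiv 0$. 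Hence $\phi$ is an admissible gauge. Replacing $(\aacu,\bacu,\cacu)$ by the gauge transform $(\abar,\bbar,\cbar):=(\aacu+\phi_t,\bacu+\nabla\phi,\cacu)$ preserves every data hypothesis of the theorem (since $e^\phi=1$ and $\pa_t(e^\phi)=0$ at $t=T$ the final-time data is untouched) and produces the normalization $\abar+\bbar^n=a+b^n$ on $\R^n\times\R$, whence $\bar\alpha(\cdot,\cdot;e^n)=\alpha(\cdot,\cdot;e^n)$. Because any gauge preserves $c$ and the 2-form $d(a\,dt+b\cdot dx)$, the theorem reduces to proving the \emph{pointwise} equalities $\abar=a$, $\bbar=b$, $\cbar=c$.

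Next, I would run the coupled Bukhgeim--Klibanov argument. For $\omega\in\{-e^n,e^1,\ldots,e^n\}$ and $\tau\in[-1,T+1]$, set $P_\omega:=u(\cdot;\omega,\tau)-\ubar(\cdot;\omega,\tau)$ and $R:=v(\cdot;e^n,\tau)-\vbar(\cdot;e^n,\tau)$. Each satisfies a wave equation whose source is linear in $(a-\abar,\,b-\bbar,\,q-\qbar)$ with smooth coefficients built from $\ubar,\vbar$ and their derivatives; the boundary contributions on $H_{\omega,\tau}$ vanish by the data hypothesis; and the wavefront contributions on $L_{\omega,\tau}$ are controlled by the linear combinations $(a-\abar)+\omega\cdot(b-\bbar)$, which vanish identically for $\omega=e^n$ by the normalization and which, across the remaining $n$ directions, span all of $(a-\abar,b-\bbar)$ exactly as in the proof of Theorem~\ref{thm:abstability}. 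Applying the BK Carleman estimate with weight $e^{\sigma t}$ on each wedge $Q_{\omega,\tau}$, integrating in $\tau$, and summing over the directions produces a coupled pair of inequalities whose right-hand sides contain only data terms (all zero) and cross terms absorbable for $\sigma$ large; this forces $a=\abar$, $b=\bbar$, and $q=\qbar$. The identity $q=c-a_t+\nabla\cdot b+a^2-b^2$ together with $a=\abar$, $b=\bbar$ then gives $\cbar=c$, which is $\cacu=c$.

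The main obstacle is arranging the coupled Carleman estimates so that the cross terms between the $u$-equations and the $v$-equation are genuinely absorbable at large $\sigma$. This is the authors' second ``new idea'' about separating the estimate on $c$ from the estimate on $a,b$, and it should require a careful choice of which $\tau$- or $t$-derivative is applied before invoking the Carleman estimate, so that the $q-\qbar$ contribution enters the $u$-estimates only through lower-order multiplication by $\ubar$ (rather than $\ubar_t$), and symmetrically for the $v$-estimate. Once this separation is engineered, the decoupling of the combined inequality is a routine large-parameter argument.
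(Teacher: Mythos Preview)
Your approach is correct and is essentially the paper's own argument: gauge-normalize along the $e^n$ direction, then couple the Carleman estimates for $\ubar$ (over the $n+1$ directions $e^1,\dots,e^{n-1},\pm e^n$) with the Carleman estimate for $\vbar$ (direction $e^n$), integrate in $\tau$, and take $\sigma$ large. One minor difference: you apply a single gauge $\phi$ to $(\aacu,\bacu,\cacu)$ to achieve $a+b^n=\abar+\bbar^n$, whereas the paper applies \emph{separate} gauges $\phi,\phiacu$ to \emph{both} triples so that $a+e^n\cdot b=0=\aacu+e^n\cdot\bacu$. The paper's symmetric normalization makes $\alpha=\alphaacu\equiv 1$ for $\omega=e^n$, so that the $\vbar$ characteristic condition on $L_{e^n,\tau}$ collapses exactly to $-\qbar$; with your asymmetric normalization one only gets $\alpha=\bar\alpha$ (not $\equiv 1$), and the characteristic condition reads $-(q-\qbar)\alpha + 2(a-\abar)\alpha_t - 2(b-\bbar)\cdot\nabla\alpha$, producing extra $|a-\abar|,|b-\bbar|$ terms on $L_{e^n,\tau}$. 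These are harmless---after integrating in $\tau$ they become $\sigma\|[a-\abar,b-\bbar]\|_{0,\sigma}^2$, which is already dominated by $\|[a-\abar,b-\bbar,q-\qbar]\|_{0,\sigma}^2$ via your $u$-estimate---but you should display this computation rather than assert that the wavefront contribution ``vanishes identically for $\omega=e^n$''.

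Your final paragraph misidentifies the obstacle. The ``second new idea'' (separating the $c$-estimate from the $a,b$-estimate via differentiated Carleman estimates and an $\epsilon$-weighting) is used only in the \emph{stability} Theorem~\ref{thm:abcstability}, not in this uniqueness theorem. Here the coupling is completely direct: apply the Carleman estimate once to $\ubar$ and once to $\vbar$ (no $t$- or $\tau$-differentiation), integrate in $\tau$, substitute the $u$-inequality $\sigma\|[a-\abar,b-\bbar]\|^2\cleq\|[a-\abar,b-\bbar,q-\qbar]\|^2$ into the $v$-inequality, and take $\sigma$ large. The idea the paper credits here is the gauge normalization from \cite{MPS2020}, which you already have.
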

%
This result is obtained by combining our most significant idea with an idea in \cite{MPS2020} about a 
uniqueness problem 
for a {\em time independent} coefficient determination problem. We do not know how to prove
a similar uniqueness result when all the three coefficients $a,b,q$ are to be recovered - that problem does not
have gauge invariance.

Our final result is a stability result for the $(a,b,c)$ recovery problem. Again, due to the gauge invariance,
we can only expect to recover curl$(a,b)$ and $c$. To obtain stability we need more data than was 
needed for the uniqueness result in Theorem \ref{thm:abcunique}. We define $\psi(x,t)$ to
be the solution of the IVP
\begin{align}
\Box \psi =\nabla \cdot b - a_t + c & \qquad (x,t) \in \R^n \times (-\infty,T]
\label{eq:pside}
\\
\psi(\cdot,t) = 0, & \qquad  t \ll 0.
\label{eq:psiic}
\end{align}

\begin{theorem}[Stability for curl$(a,b)$ and $c$ recovery problem] \label{thm:abcstability}
Suppose $T>0$ and $a,b,c,  \aacu, \bacu, \cacu$ are compactly supported smooth functions on 
$\R^n \times (-\infty,T]$ with support in $\Bbar \times [0,T]$. 
\\
If 
$\|[a,b,c,\aacu, \bacu, \cacu]\|_{C^{7 + [n/2]}} \leq M$
then
\begin{align*}
\| [c - \cacu, d \eta - d \acute{\eta} ]\|_{L^2}
& \cleq
\sum_\omega \int_{-1}^{T+1} 
\| (u - \uacu)(\cdot, T; \omega, \tau) \|_{2, H_{\omega, \tau}}
 +    \| (u_t - \uacu_t)(\cdot, T; \omega, \tau) \|_{1, H_{\omega, \tau}}
 \, d \tau
 \\
& \qquad  + 
 \sum_\omega \int_{-1}^{T+1} 
  \| (u_{tt}- \uacu_{tt})(\cdot, T; \omega, \tau) \|_{0, H_{\omega,\tau}}
\, d \tau
\\
& \qquad + \sum_\omega \int_{-1}^{T+1} 
\| (v - \vacu)(\cdot, T; \omega, \tau) \|_{1, H_{\omega, \tau}}
 +    \| (v_t - \vacu_t)(\cdot, T; \omega, \tau) \|_{0, H_{\omega, \tau}}
 \, d \tau
\\
& \qquad
+ \|(\psi - \psiacu)(\cdot,T)\|_{2,\R^n} + \|\pa_t (\psi - \psiacu)(\cdot,T)\|_{1,\R^n}
+ \|\pa_t^2 (\psi - \psiacu)(\cdot,T)\|_{0,\R^n},
\end{align*}
with $\omega$ taking the values $e^i,\pm e^n$, $i=1, \cdots,  n-1$.
Here
\[
\eta = a dt + \sum_{i=1}^n b^i d x^i,
\qquad
\acute{\eta} = \aacu dt + \sum_{i=1}^n \bacu^i d x^i,
\]
$u, v, \uacu, \vacu$ are the functions associated with
$(a,b,c)$ and $(\aacu, \bacu, \cacu)$ in Propositions \ref{prop:heaviside} and \ref{prop:delta}
and the constant depends on $M$ and the supports of $a,b,c, \aacu, \bacu, \cacu$.
\end{theorem}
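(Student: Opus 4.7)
The plan is to combine the Bukhgeim--Klibanov/Carleman argument that powered Theorem \ref{thm:qstability} with the multi-direction construction of Theorem \ref{thm:abstability}, and to use the data from $\psi$ at $t=T$ to close the gauge. Writing
\[
A := a - \aacu, \qquad B := b - \bacu, \qquad C := c - \cacu, \qquad w := u - \uacu, \qquad W := v - \vacu,
\]
the equations $\L_{a,b,c} u = 0 = \L_{\aacu,\bacu,\cacu} \uacu$ yield an inhomogeneous wave equation for $w$ in each wedge $Q_{\omega,\tau}$ whose source is a first order differential expression in $(A,B,C)$ with coefficients built from $\uacu$ and its first derivatives, and analogously for $W$ with $\vacu$ in place of $\uacu$. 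The gauge invariance recorded after \eqref{eq:gauge1} means that only $C$ and $d\eta - d\acute{\eta}$ can possibly be recovered, which is precisely the content of the asserted bound.

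\textbf{Step 1: estimate on $C$.} Following the proof of Theorem \ref{thm:qstability}, apply a Carleman-weighted energy identity with weight $e^{2\sigma t}$ for large $\sigma$ to $W$ in each wedge $Q_{\omega,\tau}$, using the higher boundary $H_{\omega,\tau}$ and characteristic boundary $L_{\omega,\tau}$ as the integration boundaries. On $L_{\omega,\tau}$ the delta-function transport identity \eqref{eq:vcc} supplies a controlled contribution, and integrating the resulting estimate in $\tau$ over $[-1,T+1]$ and summing over the chosen directions $\omega \in \{e^1,\dots,e^{n-1},\pm e^n\}$ yields
\[
\|C\|_{L^2} \;\cleq\; \sum_\omega \int_{-1}^{T+1}\bigl(\|W(\cdot,T;\omega,\tau)\|_{1,H_{\omega,\tau}} + \|W_t(\cdot,T;\omega,\tau)\|_{0,H_{\omega,\tau}}\bigr)\, d\tau \;+\; \text{lower order in } (A,B).
\]
The crucial separation --- that the $C$-part of the source is bounded independently of $(A,B)$ --- is the authors' \emph{most significant idea}; it will be achieved by tuning $\sigma$ and exploiting the structural fact that $C$ enters the $W$-source as a zeroth order multiplier against $\vacu$, whereas $(A,B)$ enter as first order derivatives of $\vacu$ that are absorbed by the Carleman large parameter.

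\textbf{Step 2 and Step 3: estimate on $d\eta - d\acute{\eta}$ and gauge closure.} With $\|C\|_{L^2}$ now controlled, repeat the Bukhgeim--Klibanov analysis of Theorem \ref{thm:abstability} on $w$ with $\omega$ running over $\{e^1,\dots,e^{n-1},\pm e^n\}$. Differentiating in $\tau$ and combining the resulting $n+1$ estimates produces an $L^2$ bound on all antisymmetric first order combinations of $\pa_\alpha(A,B)$, i.e., on the components of $d\eta - d\acute{\eta}$, with the $C$-remainder absorbed by Step 1 and with the $u$- and $v$-data on the right hand side in exactly the norms that appear in the theorem. The $(A,B)$ potentials themselves --- which are not gauge-invariant --- still enter through a residual scalar combination of the form $\nabla\cdot B - A_t + C$, but this is precisely the source of $\Box(\psi - \psiacu)$; a standard energy estimate for the IVP \eqref{eq:pside}--\eqref{eq:psiic} using the final time data $(\psi-\psiacu)(\cdot,T)$, $\pa_t(\psi-\psiacu)(\cdot,T)$ and $\pa_t^2(\psi-\psiacu)(\cdot,T)$ bounds this combination in the Sobolev norms listed in the theorem, closing the estimate.

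The chief obstacle is the decoupling in Step 1: demonstrating that the $\|C\|_{L^2}$ bound produced by the Carleman argument on $W$ carries only \emph{absorbable} dependence on $A,B$, so that no circular reasoning arises when one later invokes Step 2. Once this is in hand, Step 2 is essentially a direct extension of the argument behind Theorem \ref{thm:abstability}, and the use of $\psi$ simply converts the potential-form ambiguity into a computable wave equation source. A secondary technical point is that the regularity threshold $C^{7+[n/2]}$ must be matched across both $(a,b,c)$ and $(\aacu,\bacu,\cacu)$ so that Propositions \ref{prop:heaviside} and \ref{prop:delta} apply uniformly and provide the $C^3$ bounds on $\uacu,\vacu$ that the source terms in the $w$- and $W$-equations require.
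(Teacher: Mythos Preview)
Your proposal has a genuine gap in Step 1, and it misidentifies the roles of both the $V$-solution and the $\psi$-data.

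The absorption claim in Step 1 is false. In the equation $\L W = 2A\,\vacu_t - 2B\cdot\nabla\vacu - \qbar\,\vacu$, the unknowns $A,B$ and $\qbar$ (hence $C$) all appear as zeroth-order multipliers of bounded functions: by Proposition~\ref{prop:delta} the quantities $\vacu_t,\nabla\vacu,\vacu$ are uniformly bounded. The Carleman weight $e^{2\sigma t}$ does not distinguish them; the volume term is simply $\|[\abar,\bbar,\qbar]\|_{0,\sigma,Q}^2$, with no extra factor of $\sigma^{-1}$ on the $(A,B)$-part. After integrating in $\tau$ you obtain at best $\|C\|^2 \cleq \|[A,B]\|^2 + \text{data}$, and $\|[A,B]\|$ is not gauge-invariant, hence not controllable by the data. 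The decoupling you assert does not exist, and Step~2 cannot rescue it because the $U$-Carleman likewise leaves $\|[A,B]\|$ on both sides with the same $\sigma$-weight.

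The paper proceeds quite differently. The $\psi$-data is used \emph{first}, not last: one replaces $(a,b,c)$ by the gauge-equivalent triple $(a+\psi_t,\,b+\nabla\psi,\,c)$, which satisfies $c-a_t+\nabla\cdot b=0$, so that $c$ ceases to be an independent unknown. The $\psi$-norms on the right-hand side of the theorem appear because this substitution changes $u,\uacu,v,\vacu$ by factors $e^\psi,e^{\psiacu}$, and $|e^\psi u - e^{\psiacu}\uacu| \cleq |u-\uacu|+|\psi-\psiacu|$ (similarly for derivatives). In this gauge the $U$-Carleman, applied to $\ubar$ and to its tangential derivatives $\nop\ubar$ and $(\pa_t+\omega\cdot\nabla)\ubar$, controls $\abar+\omega\cdot\bbar$ together with its \emph{tangential} derivatives on $L_\omega$. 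The $V$-solution is then used not to isolate $C$ but to supply the missing \emph{transversal} derivative: the transport relation \eqref{eq:vcc} combined with the algebraic identity
\[
\L\alpha \;=\; \alpha\,(\pa_t-\omega\cdot\nabla)(a+\omega\cdot b)\;-\;\bigl(\nop^2-2\bop\cdot\nop+\bop^2\bigr)\alpha,
\]
which holds precisely because $c-a_t+\nabla\cdot b=0$, lets one extract $(\pa_t-\omega\cdot\nabla)(\abar+\omega\cdot\bbar)$ on $L_\omega$ from $\|\vbar\|_{1,\sigma,L_\omega}$. Adding the $V$-estimate with a small constant $\ep$ to the $U$-estimate yields the full gradient $\nabla_{x,t}(\abar+\omega\cdot\bbar)$ on $L_{\omega,\tau}$; integrating in $\tau$, summing over $\omega\in\{e^1,\dots,e^{n-1},\pm e^n\}$, and choosing $\ep$ small then $\sigma$ large gives $\|[\abar,\bbar,\nabla_{x,t}\abar,\nabla_{x,t}\bbar]\|$, from which $\cbar=\abar_t-\nabla\cdot\bbar$ and the components of $d\eta-d\acute\eta$ are read off directly.
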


The information about $\psi$ is needed for the stability result in Theorem \ref{thm:abcstability}. This information corresponds to having (for 
odd $n$) the integral of 
$\nabla \cdot b - a_t + c $ on all light cones with vertices on $t=T$ and related quantities.
For the even $n$ case, it would be a weighted integral on such solid cones.

The above theorems used the traces on $t=T$ of $u,v$ and their time derivatives,  for $\tau \in [-1,T+1]$. 
There is no information about $[a,b,c]$ in $u(\cdot, \cdot, \cdot, \tau)$ and $v(\cdot, \cdot, \cdot, \tau)$ 
for $\tau <-1$ because $u,v$ are zero since the support of $[a,b,c]$ is in $\Bbar \times [0,T]$.

The Carleman estimate with explicit boundary terms in Proposition \ref{prop:carl} (in Section \ref{sec:carl}) plays in important role in the 
proofs of the theorems. It is perhaps of mild interest that one can use the weight $t$ in the Carleman
estimate for the wave operator even though this weight is not strongly pseudo-convex. The proofs of our theorems do not require this particular 
weight; any increasing function of $t$, such as the traditional Carleman weight $e^{\lambda t}$ for some large $\lambda$, would be sufficient 
for use in our theorems. 


\begin{figure}[h]
\begin{center}
\epsfig{file=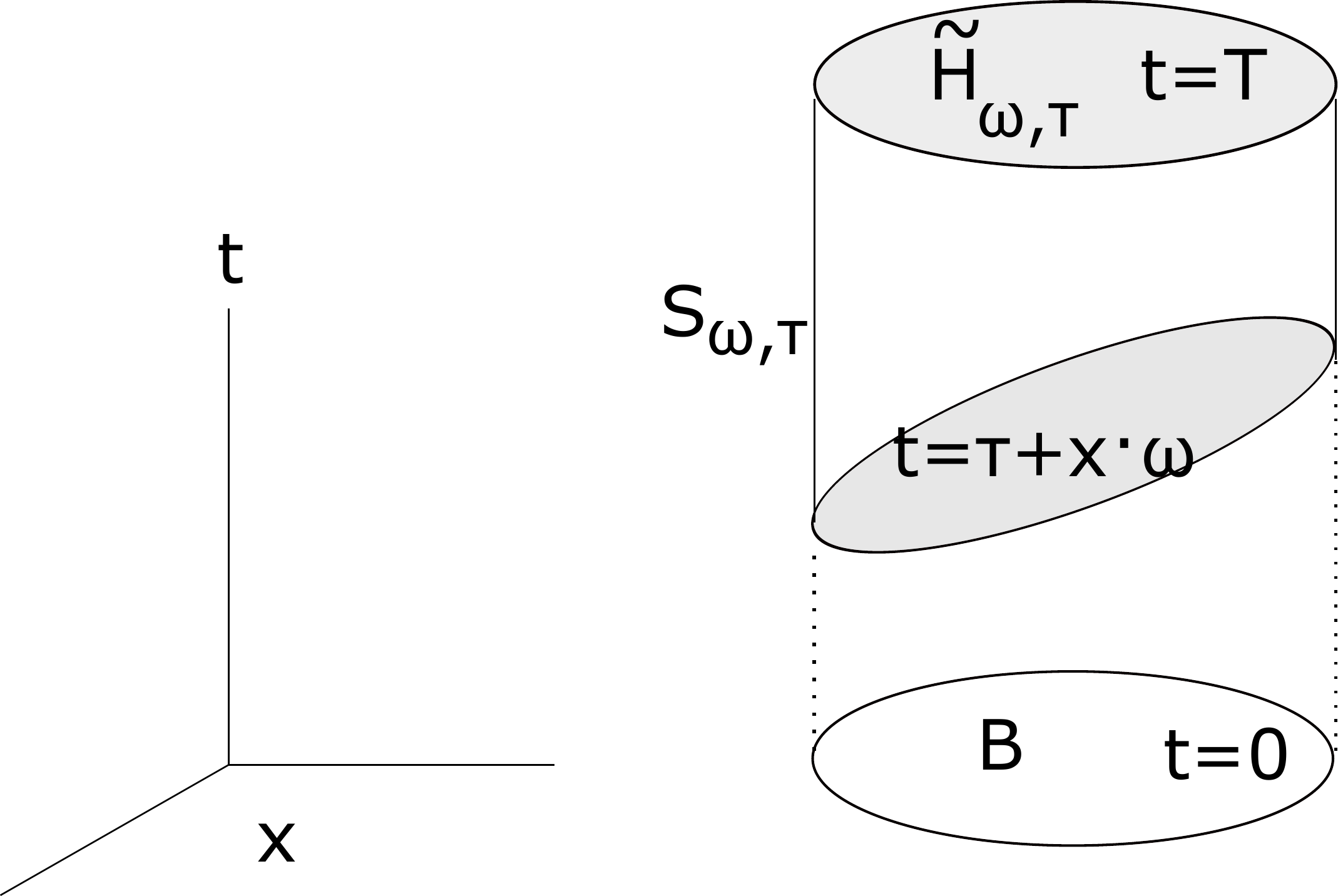, height=1.6in}
\end{center}
\caption{The cylindrical domain and its boundary}
\label{fig:qbwt}
\end{figure}

We can obtain similar results if our data consists of the lateral boundary trace and final time trace on a 
bounded domain, 
that is, if we study the injectivity  and stability of the map
\[
 (a,b,c) \to \left \{ \left [ \pa_{x,t}^\beta u,\pa_{x,t}^\beta v \right ]_{\tilde{H}_{\omega,\tau}},  \,
\left  [\pa_{x,t}^\beta u, \pa_{x,t}^\beta v \right ]_{S_{\omega,\tau}} \right \}_{\omega \in \Omega, \, \tau \in (-\infty, T+1], \, |\beta| \leq 2 }
\]
where $\Omega =\{ \pm e^i : i=1, \cdots, n \}$  and (see Figure \ref{fig:qbwt})
\[
\tilde{H}_{\omega,\tau} =  \left( \Bbar \times \{t=T\} \right) \cap Q_{\omega,\tau}
\qquad
S_{\omega,\tau} = \left( \pa B  \times (-\infty,T] \right )\cap Q_{\omega,\tau}.
\]
To accomplish this we would replace the Carleman estimate for the region $Q_{\omega, \tau}$ in Proposition
\ref{prop:carl} by a Carleman estimate for the region $ (\Bb \times \R) \cap Q_{\omega,\tau}$ and the revised proofs would be almost 
identical to the proofs in this article. The proof of the modified
Carleman estimate also would be almost identical to the proof of Proposition \ref{prop:carl}.

The one weakness of our results is that we {\em cannot} adapt our method to the situation where 
the forward problem is over a bounded domain $D \times (-\infty,T]$ rather than over the free space 
$\R^n \times (-\infty,T]$.
%

We introduce definitions used throughout the article. 
We define the differences
\[
\ubar = u - \uacu, ~ \vbar := v - \vacu, ~~ \abar := a - \aacu, ~~ \bbar := b - \bacu,  ~~\cbar = c - \cacu,
~~ \qbar = q- \qacu.
\]
Sometimes we suppress writing the $a,b,c$ dependence of $\L_{a,b,c}$ and just use $\L$ and $\Lacu$ where
$\Lacu$ corresponds to $\aacu, \bacu, \cacu$. We also have the corresponding functions $\alpha$ and $\alphaacu$ defined in (\ref{eq:alphadef}).

We also note that
\beqn
(\pa_t + \omega \cdot \nabla - (a+ \omega \cdot b) )\alpha(x,t;\omega) =0
\label{eq:alpharel}
\eeqn
as seen from
\begin{align*}
\alpha^{-1} (\alpha_t + \omega \cdot \nabla \alpha)(x,t;\omega) 
& =  \int_{-\infty}^0  
( (\pa_t + \omega \cdot \nabla )(a + \omega \cdot b))( x + s \omega, t+s) \, ds 
\\
& = \int_{-\infty}^0 \frac{d}{ds} \left ( (a + \omega \cdot b)( x + s \omega, t+s) \right ) \, ds
\\
& = (a+ \omega \cdot b)(x,t).
\end{align*}



\section{Proof of Theorem \ref{thm:qstability}}

In this theorem $a=\aacu$, $b=\bacu$. Since $\omega$ is fixed, we suppress the dependence on 
$\omega$.

Using (\ref{eq:vde}), (\ref{eq:vcc}), its version for $\aacu, \bacu, \cacu$ and that $a=\aacu$, $b=\bacu$,
the function $\vbar$ satisfies
\begin{align*}
\L \vbar = - \qbar \vacu, &  \qquad \text{on}~~ Q_\tau,
\\
\vbar = 0, & \qquad t\ll 0,
\\
2(\pa_t + \omega \cdot \nabla - (a + \omega \cdot b))\vbar  = - \qbar \alpha,
& \qquad \text{on  } L_\tau.
\end{align*}
Applying the Carleman estimate in Proposition \ref{prop:carl} to $\vbar$ on the region $Q_\tau$ we have
\[
\sigma \| \vbar\|_{1,\sigma,L_\tau}^2 
 \cleq \| \L \vbar \|_{0,\sigma,Q_\tau}^2 
+ \sigma \|\vbar\|_{1,\sigma,H_\tau}^2 + \sigma \|\pa_t \vbar\|_{0,\sigma,H_\tau}^2.
\]
Since $\alpha$ is positive and bounded 
away from $0$, on 
$L_\tau$ we have
\begin{align*}
|\qbar| \cleq |\qbar \alpha|
 = 2|(\pa_t + \omega \cdot \nabla - (a + \omega \cdot b))\vbar|
\cleq |\vbar_t| + |\nabla \vbar| + |\vbar|
\end{align*}
while on $Q_\tau$
\[
|\L \vbar | = |\qbar \vacu| \cleq |\qbar|,
\]
hence
\beqn
\sigma \| \qbar \|^2_{0,\sigma,L_\tau} 
\cleq \| \qbar\|^2_{0,\sigma,Q_{\tau}}
+ \sigma \|\vbar\|_{1,\sigma,H_\tau}^2 +  \sigma \|\pa_t \vbar\|_{0,\sigma,H_\tau}^2.
\label{eq:carlvbar}
\eeqn

We integrate (\ref{eq:carlvbar}) w.r.t $\tau$ over $[-1, T+1)$. Noting that $\qbar$ is supported
on $\Bbar \times [0,T]$, we define $\qbar=0$ for $t >T$ for convenience
and $L_\tau$ to be the set $t= \tau + x \cdot \omega \leq T$. The LHS  of (\ref{eq:carlvbar}) is
\begin{align*}
\sigma \int_{-1}^{T+1} \int_{\R^n, t = \tau + x \cdot \omega} e^{2 \sigma t} \, |\qbar(x,t)|^2 \, dx \, d \tau
& = \sigma \int_\R \int_{\R^n \times \R} e^{2 \sigma t} \, |\qbar(x,t)|^2
\delta(t-\tau - x \cdot \omega) \, dx \, dt \, d \tau
\\
& = \sigma \int_{\R^n \times \R} \int_\R e^{2 \sigma t} \, |\qbar(x,t)|^2
\delta(t-\tau - x \cdot \omega) \, d \tau \, dx \, dt 
\\
& = \sigma \int_{\R^n \times \R} e^{2 \sigma t} \, |\qbar(x,t)|^2 \, dx \, dt 
\\
& = \sigma \|\qbar\|^2_{0,\sigma, \Bbar \times [0,T]}.
\end{align*}
The integral w.r.t $\tau$ over $[-1, T+1]$, of the RHS of (\ref{eq:carlvbar}), consists of the `data part'
\[
\text{`data'} = \sigma  \int_{-1}^{T+1}  \|\vbar\|_{1,\sigma,H_\tau}^2 +  \sigma \|\pa_t \vbar\|_{0,\sigma,H_\tau}^2
\, d \tau
\]
and (using the support of $\qbar$)
\begin{align*}
\int_{-1}^{T+1} \|\qbar \|_{0, \sigma, Q_\tau}^2
\leq \int_{-1}^{T+1} \|\qbar \|_{0, \sigma, \Bbar \times [0,T]}^2
\cleq \|\qbar\|^2_{0,\sigma, \Bbar \times [0,T]}.
\end{align*}
Combining the two pieces we have
\[
\sigma \|\qbar\|^2_{0,\sigma, \Bbar \times [0,T]}
\cleq \|\qbar\|^2_{0,\sigma, \Bbar \times [0,T]} + \text{`data'}
\]
which gives us the estimate in Theorem \ref{thm:qstability} if we choose $\sigma$ large enough.
%
%
%

\section{Proof of Theorem \ref{thm:abstability}}

Here $q=\qacu$.
The proof is similar to the proof of theorem \ref{thm:qstability} except one uses the solution $U$.

We start with an intermediate estimate for a fixed $\omega,\tau$. We suppress the dependence on
$\omega, \tau$ during the derivation of this intermediate estimate.
Using (\ref{eq:ude}), (\ref{eq:ucc}) and their analogs for $\aacu, \bacu$ and that $q = \qacu$,
$\ubar$ satisfies
\begin{align*}
\L \ubar  = 2 \abar \uacu_t - 2 \bbar \cdot \nabla \uacu,
& \qquad \text{on } Q,
\\
\ubar =0, & \qquad t\ll 0,
\\
\ubar  = \alpha - \alphaacu, & \qquad \text{on } L.
\end{align*}
Applying the Carleman estimate in Proposition \ref{prop:carl} to $\ubar$ on the region $Q$,
we obtain
\begin{align}
\sigma \| \ubar \|_{1,\sigma, L}^2
& \cleq \|[\abar, \bbar] \|_{0,\sigma,Q}^2 
 + \sigma \| \ubar \|_{1,\sigma,H}^2 + \sigma \| \pa_t \ubar \|_{0,\sigma,H}^2.
 \label{eq:abcarl}
\end{align}
Now, on $L$, using (\ref{eq:alpharel}) we have
\begin{align*}
(\pa_t + \omega \cdot \nabla - (a+ \omega \cdot b))(\alpha - \alphaacu)
& = - (\pa_t + \omega \cdot \nabla - (a+ \omega \cdot b)) \alphaacu
\nn
\\
& = - (\pa_t + \omega \cdot \nabla - (\aacu+ \omega \cdot \bacu)) \alphaacu
+ (\abar + \omega \cdot \bbar) \alphaacu
\nn
\\
& = (\abar + \omega \cdot \bbar) \alphaacu.
\end{align*}
Since $\alphaacu$ is positive and bounded away from zero we have
\[
|\abar + \omega \cdot \bbar| \cleq |(\pa_t + \omega \cdot \nabla - (a+ \omega \cdot b))(\alpha - \alphaacu)|.
\]
Using this in (\ref{eq:abcarl}) we obtain
\begin{align*}
\sigma \| \abar + \omega \cdot \bbar \|^2_{0,\sigma, L_\tau}  
& \cleq \|[\abar, \bbar] \|_{0,\sigma,Q_\tau}^2 
 + \sigma \| \ubar \|_{1,\sigma,H_\tau}^2  + \sigma \| \pa_t \ubar \|_{0,\sigma,H_\tau}^2.
\end{align*}

Integrating this w.r.t $\tau$ over $[-1,T+1]$ and repeating the argument in the proof of Theorem
\ref{thm:qstability}, we obtain
\beqn
\sigma \| \abar + \omega \cdot \bbar \|^2_{0, \sigma, \Bbar \times [0,T]}
\cleq \|[ \abar, \bbar]\|^2_{0, \sigma, \Bbar \times [0,T]} + 
\sigma \int_{-1}^{T+1} \| \ubar \|_{1,\sigma,H_{\omega,\tau}}^2  +  \| \ubar_t \|_{0,\sigma,H_{\omega, \tau}}^2
\, d \tau.
\eeqn
Noting that 
\[
2 \abar = (\abar + e^n \cdot \bbar) + (\abar - e^n \cdot \bbar)
\]
and
\[
e^i \cdot \bbar = (\abar + e^i \cdot \bbar) - \abar
\]
we obtain
\begin{align*}
\sigma \| [\abar, \bbar] \|^2_{0, \sigma, \Bbar \times [0,T]}
& \cleq \|[ \abar, \bbar]\|^2_{0, \sigma, \Bbar \times [0,T]} + 
\sigma 
\sum_\omega
\int_{-1}^{T+1} \| \ubar \|_{1,\sigma,H_{\omega, \tau}}^2  +  \| \ubar_t \|_{0,\sigma,H_{\omega, \tau}}^2
\, d \tau,
\end{align*}
where $\omega$ takes the values $-e^n$ and $e^1, \cdots, e^n$.
The theorem follows if we choose $\sigma$ large enough.

%
%

\section{Proof of Theorem \ref{thm:abcunique}}

The proof proceeds as in the proofs of Theorems \ref{thm:qstability} and \ref{thm:abstability} but using
both the $U$ and the $V$ solution. However, we need to
add an idea from \cite{MPS2020} to separate $c$ from $a,b$.

We define 
\[
\phi(x,t) = -\int_{-\infty}^0 (a + e^n \cdot b)(x + s e^n, t+s) \, ds,
\qquad
\acute{\phi}(x,t) = -\int_{-\infty}^0 (\aacu + e^n \cdot \bacu)(x + s e^n, t+s) \, ds;
\] 
we are given that
\begin{align*}
[u, u_t](\cdot, T; \omega, \tau) = [\uacu, \uacu_t](\cdot, T; \omega, \tau)
& \qquad \text{on } H_{\omega, \tau}, ~~ \forall \tau \in [-1, T+1],  ~\omega = e^i, i=1, \cdots, n,
\\
[v, v_t](\cdot, T; e^n, \tau) = [\vacu, \vacu_t](\cdot, T; e^n, \tau)
& \qquad \text{on } H_{\omega, \tau}, ~~ \forall \tau \in [-1, T+1],
\\
[\phi, \phi_t](\cdot,T) = [\phiacu, \phiacu_t](\cdot,T), & \qquad \text{on} ~~\R^n.
\end{align*}
Hence
\begin{align*}
[e^\phi u, (e^ \phi u)_t](\cdot, T; \omega, \tau) = 
[e^\phiacu\uacu , (e^\phiacu \uacu)_t](\cdot, T; \omega, \tau)
& \qquad \text{on } H_{\omega, \tau} ~~ \forall \tau \in [-1, T+1], ~ \omega = e^i, i=1, \cdots, n
\\
[ e^\phi v, (e^\phi v)_t](\cdot, T; e^n, \tau) = 
[e^\phiacu\vacu, (e^\phiacu \vacu)_t](\cdot, T; e^n, \tau)
& \qquad \text{on } H_{\omega, \tau}, ~~ \forall \tau \in [-1, T+1].
\end{align*}
The two sides correspond to the data
for the coefficients $[a+\phi_t, b + \nabla \phi, c]$ and $[\aacu + \phiacu_t, \bacu + \nabla \phiacu, \cacu]$ 
so we work with this
new set of coefficients. What we gain from this new set of coefficients is that
\begin{align*}
 ( (a+\phi_t)+ e^n \cdot (b + \nabla \phi) )(x,t) 
& = (a+ e^n \cdot b)(x,t) + (\pa_t + e^n \cdot \nabla) \phi(x,t)
\\
& = (a+ e^n \cdot b)(x,t)  -  (\pa_t + e^n \cdot \nabla) \int_{-\infty}^0 (a+ e^n \cdot b)(x+se_n, t+s) \, ds
\\
& = (a+ e^n \cdot b)(x,t) - \int_{-\infty}^0  \frac{d}{ds}  ( (a+ e^n \cdot b)(x+se_n, t+s) ) \, ds
\\
& =0.
\end{align*}
Further, $[a,b]$ and $[a + \phi_t, b + \nabla \phi]$ have the same curl. So to prove our theorem
it is enough to show that if we have $[a,b,c]$ and $[\aacu, \bacu , \cacu]$ such that
\begin{align*}
[u, u_t](\cdot, T; \omega, \tau) = [\uacu, \uacu_t](\cdot, T; \omega, \tau)
& \qquad \text{on } H_{\omega, \tau}, ~~ \forall \tau \in [-1, T+1],  ~\omega = e^i, i=1, \cdots, n,
\\
[v, v_t](\cdot, T; e^n, \tau) = [\vacu, \vacu_t](\cdot, T; e^n, \tau)
& \qquad \text{on } H_{\omega, \tau}, ~~ \forall \tau \in [-1, T+1],
\end{align*}
and
\[
a + e^n \cdot b =0, ~~~ \aacu + e^n \cdot \bacu = 0 \qquad \text{on  } \R^n \times (-\infty, T]
\]
then 
\[
[a,b,c] = [\aacu, \bacu, \cacu];
\]
actually we show
\[
[a,b,q] = [\aacu, \bacu, \qacu]
\]
which then implies $c=\cacu$.

{\em Summarizing, we are given that 
\begin{align}
[\ubar, \ubar_t](\cdot, T; \omega, \tau) = 0
& \qquad \text{on } H_{\omega, \tau}, ~~ \forall \tau \in [-1, T+1],  ~\omega = e^i, i=1, \cdots, n,
\label{eq:ubarzero}
\\
[\vbar, \vbar_t](\cdot, T; e^n, \tau) = 0
& \qquad \text{on } H_{\omega, \tau}, ~~ \forall \tau \in [-1, T+1],
\label{eq:vbarzero}
\end{align}
and
\beqn
a + e^n \cdot b =0, ~~~ \aacu + e^n \cdot \bacu = 0 \qquad \text{on  }  \R^n \times (-\infty, T].
\label{eq:abeb0}
\eeqn
We have to show that
\[
[a,b,q] = [\aacu, \bacu, \qacu].
\]
}
Note that the supports of the new $a,b,c, \aacu, \bacu, \cacu$ may not be in $\Bbar \times [0,T]$
but will still be in $\Dbar \times [0,T]$ for some bounded region $D$ in $\R^n$.

Using (\ref{eq:ude}), (\ref{eq:ucc}) and its analogs for $\aacu, \bacu, \cacu$, the function
$\ubar$ satisfies
\begin{align*}
\L \ubar  = 2 \abar \uacu_t - 2 \bbar \cdot \nabla \uacu - \qbar \uacu,
& \qquad \text{on } Q,
\\
\ubar =0, & \qquad t\ll 0,
\\
\ubar  = \alpha - \alphaacu, & \qquad \text{on } L.
\end{align*}
Repeating the argument in the proof of Theorem \ref{thm:abstability}, the only difference being
that $\L \ubar$ now has a $\qbar$ term on the RHS and that (\ref{eq:ubarzero}) holds, one obtains
\beqn
\sigma \|[\abar, \bbar]\|_{0,\sigma, \Dbar \times [0,T]} \cleq 
\|[\abar, \bbar, \qbar]\|_{0,\sigma, \Dbar \times [0,T]}.
\label{eq:abarbbar9}
\eeqn

Next, we take $\omega = e^n$ and we suppress writing the explicit dependence on $e^n$.
Using (\ref{eq:vde}), (\ref{eq:vic}) and its analogs for $\aacu, \bacu, \cacu$, the function
$\vbar$ satisfies
\begin{align*}
\L \vbar  = 2 \abar \vacu_t - 2 \bbar \cdot \nabla \vacu - \qbar \vacu
& \qquad \text{on } Q,
\\
\vbar =0, & \qquad t\ll 0.
\end{align*}
Applying the Carleman estimate in Proposition \ref{prop:carl} to $\vbar$ in the region $Q$ and noting
(\ref{eq:vbarzero}), we have
\begin{align}
\sigma \| \vbar\|_{1,\sigma,L_\tau}^2 
&  \cleq \| \L \vbar \|_{0,\sigma,Q_\tau}^2
 \cleq \|[\abar, \bbar, \qbar]\|_{0,\sigma,Q_\tau}^2.
 \label{eq:vbar6}
\end{align}
In this estimate $\omega=e_n$ and from our discussion above we know that $\alpha =1$ 
and $\alphaacu=1$ in this case. So, on $L$, using (\ref{eq:vcc}) and its equivalent
for $[\aacu, \bacu, \cacu]$, we have
\begin{align*}
2(\pa_t + \omega \cdot \nabla - (a+ \omega \cdot b))(\vbar)
& = 2(\pa_t + \omega \cdot \nabla - (a+ \omega \cdot b))(v - \vacu)
\\
& = - \L \alpha - 2(\pa_t + \omega \cdot \nabla - (\aacu+ \omega \cdot \bacu))\vacu
+ (\abar + \omega \cdot \bbar) \vacu
\\
& =  - \L \alpha + \Lacu \alphaacu + (\abar + \omega \cdot \bbar) \vacu
\\
& = - \qbar + (\abar + \omega \cdot \bbar) \vacu
\\
& = - \qbar.
\end{align*}
Using this in (\ref{eq:vbar6}) we obtain
\beqn
\sigma \| \qbar \|_{0,\sigma, L_{e_n,\tau}}^2
\cleq \|[\abar, \bbar, \qbar]\|_{0,\sigma,Q_{e^n,\tau}}^2.
\label{eq:vbar7}
\eeqn
Integrating this over $\tau \in [-1, T+1]$ and using the arguments used in the proofs of the earlier theorems
we obtain
\[
\sigma \|\qbar \|_{0,\sigma, \Dbar \times [0,T]} \cleq 
\|[\abar, \bbar, \qbar]\|_{0,\sigma, \Dbar \times [0,T]}.
\]
Combining this with (\ref{eq:abarbbar9}) we obtain
\[
\sigma \| [\abar, \bbar, \qbar] \|_{0,\sigma, \Dbar \times [0,T]} \cleq 
\|[\abar, \bbar, \qbar]\|_{0,\sigma, \Dbar \times [0,T]},
\]
so taking $\sigma$ large enough we obtain $\abar =0, \bbar=0, \qbar=0$; hence
$(a,b) = (\aacu, \bacu)$ and $c = \cacu$. However these $a,b, \aacu, \bacu$ are the $\phi$ modified versions
of the old $a,b, \aacu, \bacu$ so we obtain
\[
d \left ( a dt + \sum_{i=1}^n b^i dx^i \right ) = d \left ( \aacu dt + \sum_{i=1}^n \bacu^i dx^i \right ) 
\]
for the older $a,b, \aacu, \bacu$. Of course we have already shown $c= \cacu$.


\section{Proof of Theorem \ref{thm:abcstability}}

From the introduction we know that if $u,v$ are the solutions associated with the coefficients $[a,b,c]$ then
$e^\psi u, e^{\psi} v$ are the solutions associated with the coefficients $[a + \psi_t, b + \nabla \psi, c]$.
Further, using $|e^s -1| \leq e^M |s|$ for all $s \in [-M,M]$ we have
\begin{align*}
| e^\psi w - e^{\psiacu} \wacu| 
& \leq | e^\psi w - e^{\psi} \wacu| + | e^{\psi} \wacu - e^{\psiacu} \wacu|
\\
& \cleq |w - \wacu| + | e^{\psi - \psiacu}-1|
\\
& \cleq |w - \wacu| + |\psi - \psiacu|.
\end{align*} 

Similar estimates hold for the first and second order derivatives of $e^\psi w - e^{\psiacu} \wacu$. Further $[a,b]$ and $[a + \psi_t, b + \nabla \psi]$
have the same curl so we may assume we are working with the coefficients 
$[a + \psi_t, b + \nabla \psi, c]$.
Now
\[
c - (a + \psi_t)_t + \nabla \cdot (b + \nabla \psi)
= c -a_t + \nabla \cdot b - \Box \psi =0.
\]
So it is enough to prove Theorem \ref{thm:abcstability} with the assumption that
\beqn
c - a_t + \nabla \cdot b =0, \qquad \cacu - \aacu_t + \nabla \cdot \bacu =0;
\label{eq:catnb}
\eeqn
note this also implies $\psi=0, ~ \psiacu=0$.

Given the unit vector $\omega$, we define the orthogonal decompositions
\[
\nabla = \no + \nop, \qquad b = \bo + \bop
\]
where
\[
\no := \omega (\omega \cdot \nabla), 
\qquad 
\nop := \nabla - \omega (\omega \cdot \nabla),
\qquad \bo := (\omega \cdot b) \omega, \qquad \bop := b - (\omega \cdot b) \omega.
\]
Note that
\[
\no \cdot \nop = 0 = \nop \cdot \no, \qquad \omega \cdot \nop =0,
\qquad
\bo \cdot \nop =0 = \nop \cdot \bo,
\qquad 
\bop  \cdot \no =0 = \no \cdot \bop.
\]
We obtain some intermediate estimates and, for convenience, {\em temporarily we suppress the dependence on $\tau$}.

\noindent
\underline{Estimate from the $U$ solution.}

Using (\ref{eq:catnb}), we have
\[
\L = \Box - 2 a \pa_t + 2 b \cdot \nabla + a^2 - b^2,
\qquad 
\Lacu = \Box - 2 \aacu \pa_t + 2 \bacu \cdot \nabla + \aacu^2 - \bacu^2;
\]
hence, from  (\ref{eq:ude}), (\ref{eq:ucc}), we have
\begin{align}
\L \ubar  = 2 \abar \uacu_t - 2 \bbar \cdot \nabla \uacu + ((b+\bacu) \bbar - (a+\aacu) \abar) \uacu,
& \qquad \text{on } Q_\omega,
\label{eq:Lubarde}
\\
\ubar =0, & \qquad t\ll 0,
\label{eq:Lubaric}
\\
\ubar  = \alpha - \alphaacu, & \qquad \text{on } L_\omega.
\label{eq:Lubarcc}
\end{align}
So Proposition \ref{prop:carl} applied to $\ubar$ in the region $Q_\omega$ gives us
\begin{align}
\| \ubar \|^2_{1,\sigma,Q_\omega} +
\|\alpha - \alphaacu\|^2_{1,\sigma, L_\omega}
 \cleq \frac{1}{\sigma}\|[\abar, \bbar] \|_{0,\sigma,Q_\omega}^2
 + \| \ubar \|_{1,\sigma,H_\omega}^2  + \| \pa_t \ubar \|_{0,\sigma,H_\omega}^2,
 \label{eq:uest1}
\end{align}

Next, we obtain higher order estimates by differentiating (\ref{eq:Lubarde}) - (\ref{eq:Lubarcc}), keeping in mind that 
$\nop$ and $\pa_t + \omega \cdot \nabla$ span the tangent space to $L_\omega$.

We have
\begin{align*}
\L (\nop \ubar)  = \nop( 2 \abar \uacu_t - 2 \bbar \cdot \nabla \uacu + ((b+\bacu) \bbar - (a+\aacu) \abar)\uacu )
+ [\L, \nop] \ubar,
& \qquad \text{on } Q_\omega
\\
\nop \ubar =0, & \qquad t \ll 0
\\
\nop \ubar  = \nop( \alpha - \alphaacu),
&  \qquad \text{on } L_\omega,
\end{align*}
so, in $Q_\omega$,
\begin{align*}
|\L ( \nop \ubar) | \cleq | [\abar, \bbar, \nabla \abar, \nabla \bbar, \abar_t, \bbar_t]| + |[\ubar, \nabla \ubar, \pa_t \ubar]|.
\end{align*}
Hence applying Proposition (\ref{prop:carl}) to $ \nop \ubar$ we obtain%
\begin{align}
\| \nop (\alpha - \alphaacu) \|_{1,\sigma,L_\omega}^2 
& \cleq \frac{1}{\sigma}\|[\abar, \bbar, \nabla \abar, \nabla \bbar, \abar_t, \bbar_t] \|_{0,\sigma,Q_\omega}^2 
+ \frac{1}{\sigma}
\| \ubar\|_{1,\sigma,Q_\omega}^2
\nn
\\
& \qquad
 + \| \nop \ubar \|_{1,\sigma,H_\omega}^2 
 + \| \pa_t \nop \ubar \|_{0,\sigma,H_\omega}^2.
 \label{eq:tttemp1}
\end{align}
%
We repeat the argument used to obtain (\ref{eq:tttemp1})
with differentiation w.r.t $\pa_t + \omega \cdot \nabla$ replacing differentiation
w.r.t $\nop$. Noting that $\pa_t + \omega \cdot \nabla$ is also tangential 
to $L_\omega$, we obtain
\begin{align}
 \| (\pa_t + \omega \cdot \nabla) (\alpha - \alphaacu) \|_{1,\sigma,L_\omega}^2 
& \cleq \frac{1}{\sigma} \|[\abar, \bbar,\nabla \abar, \nabla \bbar, \abar_t, \bbar_t] \|_{0,\sigma,Q_\omega}^2 
+ \frac{1}{\sigma}
\| \ubar\|_{1,\sigma,Q_\omega}^2
\nn\\
& \qquad 
 +  \| [\nabla \ubar, \pa_t \ubar] \|_{1,\sigma,H_\omega}^2 
 +  \| \pa_t^2 \ubar \|_{0,\sigma,H_\omega}^2.
 \label{eq:tttemp2}
\end{align}

Using (\ref{eq:uest1}), (\ref{eq:tttemp1}), (\ref{eq:tttemp2}), for $\sigma$ large enough, we obtain
\begin{align}
\| (\pa_t + \omega \cdot \nabla)& (\alpha - \alphaacu) \|_{1,\sigma,L_\omega}^2 
 +
\| \nop (\alpha - \alphaacu) \|_{1,\sigma,L_\omega}^2 
+
\| \alpha - \alphaacu\|^2_{1,\sigma, L_\omega}
\nn
\\
& \cleq \frac{1}{\sigma} \|[\abar, \bbar,\nabla \abar, \nabla \bbar, \abar_t, \bbar_t]\|^2_{0,\sigma, Q_\omega}
+ \|[\ubar, \nabla \ubar, \pa_t \ubar] \|^2_{1,\sigma,H_\omega}
+ \|\pa_t^2 \ubar\|^2_{0,\sigma, H_\omega}.
\label{eq:newjest}
\end{align}

We use (\ref{eq:newjest}) to estimate $\abar, \bbar$.
From (\ref{eq:alpharel})
\begin{align}
(\pa_t + \omega \cdot \nabla - (a+ \omega \cdot b))(\alpha - \alphaacu)
& = - (\pa_t + \omega \cdot \nabla - (a+ \omega \cdot b)) \alphaacu
\nn
\\
& = - (\pa_t + \omega \cdot \nabla - (\aacu+ \omega \cdot \bacu)) \alphaacu
+ (\abar + \omega \cdot \bbar) \alphaacu
\nn
\\
& = (\abar + \omega \cdot \bbar) \alphaacu,
\label{eq:Daaprime}
\end{align}
and $\alphaacu$ is positive and bounded away from zero. Hence
\beqn
|\abar + \omega \cdot \bbar| \cleq |(\pa_t + \omega \cdot \nabla)(\alpha - \alphaacu)|
+ |\alpha - \alphaacu|.
\label{eq:jj1}
\eeqn
Differentiating (\ref{eq:Daaprime}) w.r.t $\nop$ and noting that $\nop$ commutes with
$\pa_t + \omega \cdot \nabla$, we obtain
\begin{align}
|\nop (\abar + \omega \cdot \bbar)|
\cleq |\abar + \omega \cdot \bbar|
+ |\nop(\pa_t + \omega \cdot \nabla)(\alpha - \alphaacu)|
+ |\nop (\alpha - \alphaacu)| + |\alpha - \alphaacu|.
\label{eq:jj2}
\end{align}
Differentiating (\ref{eq:Daaprime}) w.r.t $\pa_t + \omega \cdot \nabla$, we obtain
\begin{align}
|(\pa_t + \omega \cdot \nabla) (\abar + \omega \cdot \bbar)|
& \cleq |\abar + \omega \cdot \bbar|
+ |(\pa_t + \omega \cdot \nabla)^2(\alpha - \alphaacu)|
+ |(\pa_t + \omega \cdot \nabla) (\alpha - \alphaacu)|
\nn\\
& \qquad + |\alpha - \alphaacu|.
\label{eq:jj3}
\end{align}
Since $\nop$ and $\pa_t + \omega \cdot \nabla$ are tangential to $L_\omega$, using (\ref{eq:jj1}), (\ref{eq:jj2}), 
(\ref{eq:jj3}) and (\ref{eq:newjest}), we conclude
\begin{align}
 \| [(\pa_t + &  \omega \cdot \nabla) (\abar + \omega \cdot \bbar), \,  \nop(\abar + \omega \cdot \bbar), \,
 \abar + \omega \cdot \bbar ]\|_{0,\sigma,L_\omega}^2
+ \| [\nop (\alpha - \alphaacu), \alpha - \alphaacu] \|_{1,\sigma,L_\omega}^2 
\nn
\\
& \qquad \cleq 
\frac{1}{\sigma} \|[\abar, \bbar, \nabla \abar, \nabla \bbar, \abar_t, \bbar_t] \|_{0,\sigma,Q_\omega}^2 
 +  \| [\ubar, \nabla \ubar, \pa_t \ubar] \|_{1,\sigma,H_\omega}^2 
 + \|  \ubar_{tt} \|_{0,\sigma,H_\omega}^2,
 \label{eq:ufinal}
\end{align}
for large enough $\sigma$.
%
%

\noindent
\underline{Estimate from the $V$ solution.}

Using (\ref{eq:vde}), (\ref{eq:vic}) and its version for $\aacu, \bacu, \cacu$, the function $\vbar$ satisfies
\begin{align*}
\L \vbar =  2 \abar \vacu_t - 2 \bbar \cdot \nabla \vacu + ((b+\bacu) \bbar - (a+\aacu) \abar)\vacu, &  \qquad \text{on}~~ Q_\omega
\\
\vbar =0, & \qquad t\ll 0.
\end{align*}
Hence, applying Proposition (\ref{prop:carl}) to $\vbar$ over the region $Q_\omega$,
we obtain
\beqn
\| \vbar\|_{1,\sigma,L_\omega}^2
\cleq \frac{1}{\sigma}\| [\abar, \bbar]\|_{0,\sigma,Q_\omega}^2 
+ \|\vbar\|_{1,\sigma,H_\omega}^2 + \|\pa_t \vbar\|_{0,\sigma,H_\omega}^2.
\label{eq:vLab}
\eeqn
On $L_\omega$,
\[
2(\pa_t + \omega \cdot \nabla - (a + \omega \cdot b))v = -  \L \alpha,
\qquad
2(\pa_t + \omega \cdot \nabla - (\aacu + \omega \cdot \bacu))\vacu = -  \Lacu \alphaacu,
\]
hence
\begin{align*}
2(\pa_t + \omega \cdot \nabla - (a+ \omega \cdot b))(\vbar)
& = 2(\pa_t + \omega \cdot \nabla - (a+ \omega \cdot b))(v - \vacu)
\\
& = - \L \alpha - 2(\pa_t + \omega \cdot \nabla - (\aacu+ \omega \cdot \bacu))\vacu
+ 2(\abar + \omega \cdot \bbar) \vacu
\\
& =  - \L \alpha + \Lacu \alphaacu + 2(\abar + \omega \cdot \bbar) \vacu
\end{align*}
implying
\[
| (\pa_t + \omega \cdot \nabla - (a+ \omega \cdot b))(\vbar)| \cgeq | \L \alpha - \Lacu \alphaacu| - |[\abar, \bbar]|
\]
which used in (\ref{eq:vLab}) gives us
\beqn
\|\L \alpha - \Lacu \alphaacu\|^2_{0,\sigma,L_\omega} 
\cleq 
\|[\abar, \bbar]\|^2_{0,\sigma, L_\omega}
+ \frac{1}{\sigma}
\| [\abar, \bbar]\|_{0,\sigma,Q_\omega}^2 
+ \|\vbar\|_{1,\sigma,H_\omega}^2 +  \|\pa_t \vbar\|_{0,\sigma,H_\omega}^2.
\label{eq:Laaprime}
\eeqn

We need a different representation for $\L \alpha - \Lacu \alphaacu$. We claim
\begin{align}
	 \L \alpha  = \alpha ( \pa_t - \omega \cdot \nabla ) ( a + \omega \cdot b ) - \left (  \nop^2 - 2 \bop \cdot \nop  + \bop^2 \right ) \alpha,
	\label{eq:newLalpha}
\end{align}
provided $c-a_t + \nabla \cdot b=0$ - we postpone the proof of (\ref{eq:newLalpha}) to the end of this section.
Then 
\begin{align*}
	\L \alpha - \Lacu \alphaacu 
	&  = \alpha \left ( \pa_t - \omega \cdot \nabla   \right ) ( \abar + \omega \cdot \bbar ) + ( \alpha -\alphaacu) \left ( \pa_t - \omega \cdot \nabla   \right ) ( \aacu + \omega \cdot \bacu )
	- \nop^2 (\alpha - \alphaacu)  \\
	& \qquad 
	+ 2 \bop \cdot \nop (\alpha - \alphaacu) - \bop^2 (\alpha - \alphaacu) 
	+ 2 \bbar_\omega^\perp \alphaacu - \bbar_\omega^\perp \cdot (b+ \bacu)_\omega^\perp \alphaacu.
\end{align*}
Using this and that $\alpha$ is bounded away from zero, we have
\[
|\L \alpha - \Lacu \alphaacu | \cgeq 
\left | \left ( \pa_t - \omega \cdot \nabla   \right ) ( \abar + \omega \cdot \bbar ) \right | 
- | \nop^2 (\alpha - \alphaacu)| - |\nop (\alpha - \alphaacu)| - |\alpha - \alphaacu| - |\bbar|,
\]
which used in (\ref{eq:Laaprime}) gives us
\begin{align}
	 \| \left ( \pa_t - \omega \cdot \nabla   \right ) ( \abar + \omega \cdot \bbar ) \|^2_{0,\sigma,L_\omega}
	& \cleq  \|[\abar,\bbar]\|_{0,\sigma,L_\omega}^2 +
	\| \nop^2 (\alpha - \alphaacu) \|_{0,\sigma,L_\omega}^2
	+ \|\nop (\alpha - \alphaacu) \|_{0,\sigma,L_\omega}^2 
	\nn
	\\
	& \qquad + \|\alpha - \alphaacu\|_{0,\sigma,L_\omega}^2  + \frac{1}{\sigma} \| [\abar, \bbar]\|_{0,\sigma,Q_\omega}^2 
	+  \|\vbar\|_{1,\sigma,H_\omega}^2 +  \|\pa_t \vbar\|_{0,\sigma,H_\omega}^2.
	\label{eq:finalvest}
\end{align}
%
%
\noindent
\underline{Combining the $U$, $V$ estimates.}

 Multiplying the $V$ based estimate (\ref{eq:finalvest}) by a small (independent of $\sigma$) $\ep$ in $(0,1)$ and adding it to the combined $U$ based estimate (\ref{eq:ufinal}), we obtain 
\begin{align}\label{gen est}
	\ep \| \left ( \pa_t - \omega \cdot \nabla   \right ) &  ( \abar + \omega \cdot \bbar ) \|^2_{0,\sigma,L_{\omega, \tau}}
	+  \| \abar + \omega \cdot \bbar \|_{0,\sigma,L_{\omega,\tau}}^2 \nn
	\\
	& +  \| \nop(\abar + \omega \cdot \bbar) \|_{0,\sigma,L_{\omega, \tau}}^2
	+  \| (\pa_t + \omega \cdot \nabla) (\abar + \omega \cdot \bbar) \|_{0,\sigma,L_{\omega, \tau}} ^2   \nn 
	\\
	& \qquad \qquad \cleq \ep \| [\abar, \bbar] \|_{0,\sigma, L_{\omega, \tau}}^2
	+ \frac{1}{\sigma} \|[\abar, \bbar, \nabla \abar, \nabla \bbar, \abar_t, \bbar_t] \|_{0,\sigma,Q_{\omega,\tau}}^2    
	+ \text{data}_{\omega,\tau,\sigma}
\end{align}
where
\[
\text{data}_{\omega,\tau, \sigma} =  \| [\nabla \ubar, \ubar_t, \ubar] \|_{1,\sigma,H_{\omega,\tau}}^2 
+ \| \ubar_{tt}\|^2_{0,\sigma,H_{\omega,\tau}}
+  \|\vbar\|_{1,\sigma,H_{\omega,\tau}}^2 + \| \vbar_t \|^2_{0, \sigma, H_{\omega,\tau}}.
\]
Expanding the LHS of (\ref{gen est}) we get (noting that $\epsilon<1$)
\begin{align*}
    \| \abar +  \omega \cdot \bbar & \|_{0,\sigma,L_{\omega,\tau}}^2  + \ep \| \nop(\abar + \omega \cdot \bbar) \|_{0,\sigma,L_{\omega, \tau}}^2\\
    & + \ep  \| \pa_t (\abar + \omega \cdot \bbar) \|_{0,\sigma,L_{\omega, \tau}}^2 + \ep \| (\omega \cdot \nabla) (\abar + \omega \cdot \bbar) \|_{0,\sigma,L_{\omega, \tau}}^2\\
    & \qquad \qquad \cleq \ep \| [\abar, \bbar] \|_{0,\sigma, L_{\omega, \tau}}^2
	+ \frac{1}{\sigma} \|[\abar, \bbar, \nabla \abar, \nabla \bbar, \abar_t, \bbar_t] \|_{0,\sigma,Q_{\omega,\tau}}^2    
	+ \text{data}_{\omega,\tau,\sigma}
\end{align*}
which immediately gives
\begin{align*}
    \| \abar +  \omega \cdot \bbar  \|_{0,\sigma,L_{\omega,\tau}}^2  +\  & \ep \| \nabla_{x,t} (\abar + \omega \cdot \bbar) \|_{0,\sigma,L_{\omega, \tau}}^2 \\
    & \cleq \ep \| [\abar, \bbar] \|_{0,\sigma, L_{\omega, \tau}}^2
	+ \frac{1}{\sigma} \|[\abar, \bbar, \nabla \abar, \nabla \bbar, \abar_t, \bbar_t] \|_{0,\sigma,Q_{\omega,\tau}}^2    
	+ \text{data}_{\omega,\tau,\sigma}
\end{align*} 

Integrating this w.r.t $\tau$ over $[-1,T+1]$ and repeating the argument used at the end of the proof of Theorem \ref{thm:qstability}  we obtain
\begin{align*}
	\| \abar +  \omega \cdot \bbar  &  \|_{0,\sigma,\R^n \times [0,T]}^2  +\ \ep \| \nabla_{x,t} (\abar + \omega \cdot \bbar) \|_{0,\sigma,\R^n \times [0,T]}^2 \\
	& \cleq \ep  \| [\abar,\bbar] \|_{0,\sigma, \R^n \times [0,T]}^2
	+ \frac{1}{\sigma} \|[\abar, \bbar, \nabla \abar, \nabla \bbar, \abar_t, \bbar_t]\|_{0,\sigma, \R^n \times [0,T]}^2 
	+  \int_{-1}^{T+1} \text{data}_{\omega,\tau,\sigma} \, d \tau.
\end{align*}

All norms below are  $\| \cdot \|_{0,\sigma, \R^n \times [0,T]}$ unless noted otherwise. To complete the proof, we repeat the argument in the proof of Theorem 1.4. That is, we vary $\omega$ in the set $\{\pm e^n, e^i \ ; 1 \le i \le n-1 \}$ and finally obtain
\begin{align*}
	\| [\abar,\bbar] \|^2 + \ep \| [\nabla_{x,t}\abar, \nabla_{x,t}\bbar]  \|^2 \cleq \ep  \| [\abar,\bbar] \|^2
	+ \frac{1}{\sigma} \|[\nabla_{x,t}\abar, \nabla_{x,t}\bbar] \|^2
	+  \int_{-1}^{T+1} \text{data}_{\pm \omega,\tau,\sigma} \, d \tau.
\end{align*}
So taking $\ep$ small enough and then fixing a $\sigma$ large enough
\[
\|[\abar, \bbar, \nabla_{x,t} \abar, \nabla_{x,t} \bbar] \|^2
\cleq \sum_{i=1}^n  \int_{-1}^{T+1} \text{data}_{\pm e_i, \tau, \sigma} \, d \tau.
\]
Since $\cbar = \abar_t - \nabla \cdot \bbar$, we conclude
\[
\|[\abar, \bbar, \cbar, \nabla_{x,t} \abar, \nabla_{x,t} \bbar] \|^2
\cleq \sum_{i=1}^n  \int_{-1}^{T+1} \text{data}_{\pm e_i, \tau, \sigma} \, d \tau,
\]
for the fixed large enough $\sigma$.

For a fixed $\sigma$, on a compact set, the weighted and unweighted norms are equivalent, so the theorem is proved. It remains to show (\ref{eq:newLalpha}) 
when $c = a_t - \nabla \cdot b$.

\noindent
\underline{Proof of (\ref{eq:newLalpha}).}

We note that
\begin{align*}
	\L \alpha & 
	= \left ( \pa_t^2 - \Delta - 2 a \pa_t + 2 b \cdot \nabla - a_t + \nabla \cdot b + a^2 - b^2 + c \right ) \alpha\\
	& = \left ( \pa_t^2 - ( \omega \cdot \nabla )^2 - \nop^2 - 2 a \pa_t + 2 b \cdot \nabla + a^2 - b^2 \right ) \alpha
	\\
	& = \left (  ( \pa_t - \omega \cdot \nabla )  ( \pa_t + \omega \cdot \nabla ) - \nop^2 - 2 a \pa_t + 2 b \cdot \nabla + a^2 - b^2 \right ) \alpha.
\end{align*}
Hence using (\ref{eq:alpharel})
\begin{align*}
	\L \alpha 
	& =  ( \pa_t - \omega \cdot \nabla) \left ( ( a + \omega \cdot b ) \alpha \right ) - \left ( \nop^2 + 2 a \pa_t - 2 b \cdot \nabla - a^2 + b^2 \right ) \alpha
	\\
	& = \alpha ( \pa_t - \omega \cdot \nabla)( a + \omega \cdot b ) - \left ( -( a+ \omega \cdot b ) ( \pa_t - \omega \cdot \nabla) + \nop^2 + 2 a \pa_t - 2 b \cdot \nabla - a^2 + b^2 \right ) \alpha,\\
	& = \alpha ( \pa_t - \omega \cdot \nabla) ( a + \omega \cdot b ) - \left ( ( a-  \omega \cdot b ) ( \pa _t + \omega \cdot \nabla ) + \nop^2 - 2 ( b - \omega ( \omega \cdot b ) ) \cdot \nabla - a^2 + b^2  \right ) \alpha\\
	& = \alpha ( \pa_t - \omega \cdot \nabla ) ( a + \omega \cdot b )  - \left ( ( a-  \omega \cdot b ) ( a + \omega \cdot b ) + \nop^2 - 2 ( b - \omega ( \omega \cdot b ) ) \cdot \nabla - a^2 + b^2  \right ) \alpha\\
	& = \alpha ( \pa_t - \omega \cdot \nabla ) ( a + \omega \cdot b ) - \left (  \nop^2 - 2 \bop \cdot \nop      + \bop^2 \right ) \alpha. 
\end{align*}

%


%

\section{The Carleman estimate}\label{sec:carl}

We show that the standard Carleman estimate with boundary terms
holds for the operator $\L_{a,b,c}$ with the Carleman weight $t$ over the region $Q_{\omega, \tau}$.
We need the explicit boundary terms in the proofs of our theorems. Here $a,b^i,c$ are compactly supported
smooth functions on $\R^n \times [0,T]$.
\begin{prop}\label{prop:carl}
If $w(x,t)$ is a compactly supported $C^3$ function on $Q_{\omega, \tau}$ then, for large enough $\sigma$,
we have
\begin{align}
\sigma \int_{Q_{\omega, \tau}} e^{2 \sigma t} ( |\nabla_{x,t} w|^2 + \sigma^2 |w|^2 )
 & + \sigma \int_{L_{\omega,\tau}}e^{2 \sigma t} ( |\nabla_L w|^2 + \sigma^2 |w|^2) 
 \nn
 \\
& \cleq \int_{Q_{\omega, \tau}} e^{2 \sigma t} |\L_{a,b,c} w|^2 
+ \sigma \int_{H_{\omega, \tau}} e^{2 \sigma t} ( |\nabla_{x,t} w|^2 + \sigma^2 |w|^2)
\label{eq:vcarlpos}
\end{align}
with the constant independent of $w$ and $\sigma$. Here $\nabla_L$ is the gradient operator on the plane 
$L_{\omega, \tau}$.
\end{prop}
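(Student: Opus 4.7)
The plan is to conjugate by the weight $e^{\sigma t}$: set $v := e^{\sigma t}w$, so that $e^{\sigma t}\Box w = P_\sigma v$ with $P_\sigma := \Box - 2\sigma\pa_t + \sigma^2$. I would split $P_\sigma = A + B$ into its formally self-adjoint and skew-adjoint parts, $A := \Box + \sigma^2$ and $B := -2\sigma\pa_t$, and build everything from the master identity
\[
\|P_\sigma v\|_{L^2(Q_{\omega,\tau})}^2 = \|Av\|^2 + \|Bv\|^2 + 2(Av, Bv),
\]
where $(\cdot,\cdot)$ is the $L^2(Q_{\omega,\tau})$ pairing. All interior positivity and boundary structure will come from the three summands on the right; then I would revert to $w$ and absorb the first- and zeroth-order perturbations that separate $\L_{a,b,c}$ from $\Box$.

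For the cross term $2(Av, Bv) = -4\sigma \int_{Q_{\omega,\tau}} (\Box v + \sigma^2 v)\pa_t v$, integration by parts in $t$ and $x$ turns every interior term into a divergence, and the compact support of $w$ kills the lateral boundary at infinity. The outward normal on $L_{\omega,\tau}$ is $(\omega,-1)/\sqrt{2}$ while on $H_{\omega,\tau}$ it is $(0,1)$, so the $n_t$ values have opposite signs; a direct calculation yields
\[
2(Av,Bv) = 2\sigma\int_{L_{\omega,\tau}}\bigl((\pa_t v + \omega\cdot\nabla v)^2 + |\nop v|^2 + \sigma^2 v^2\bigr)\,dx - 2\sigma\int_{H_{\omega,\tau}}\bigl(|\nabla_{x,t}v|^2 + \sigma^2 v^2\bigr)\,dx.
\]
Since $\pa_t + \omega\cdot\nabla$ together with $\nop$ span the tangent space to $L_{\omega,\tau}$, the $L$-integrand is equivalent to $|\nabla_L v|^2 + \sigma^2 v^2$; this supplies exactly the boundary structure of the claimed estimate, with the correct sign on $L$ and the $H$ term landing on the RHS.

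The main obstacle is that the linear weight $\varphi = t$ is \emph{not} strongly pseudo-convex for $\Box$: the commutator $[A,B]$ vanishes identically, so $2(Av,Bv)$ contributes nothing in the interior. I would overcome this by squeezing positivity out of $A$ itself via the elementary inequality $\|Av\|^2 \geq 2\sigma^2(Av,v) - \sigma^4\|v\|_{L^2(Q_{\omega,\tau})}^2$ (which is $\|Av - \sigma^2 v\|^2 \geq 0$ rearranged). After integration by parts, $(Av,v) = \int_{Q_{\omega,\tau}} (|\nabla v|^2 - (\pa_t v)^2 + \sigma^2 v^2) + (\text{bd})$, and combining with $\|Bv\|^2 = 4\sigma^2 \int_{Q_{\omega,\tau}} (\pa_t v)^2$ yields interior positivity
\[
\|Av\|^2 + \|Bv\|^2 \geq 2\sigma^2\int_{Q_{\omega,\tau}} |\nabla_{x,t}v|^2 + \sigma^4\int_{Q_{\omega,\tau}} v^2 + (\sigma^2\text{-boundary terms}),
\]
which is stronger than the $\sigma$-scale interior LHS demanded by the statement. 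The leftover $\sigma^2$-boundary terms on $L$ are absorbed into the dominant $\sigma^3\int_L v^2$ on the LHS, and on $H$ they are bounded by the $\sigma^3\int_H v^2$ on the RHS, both via AM--GM for $\sigma$ large.

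Reverting from $v$ to $w$ using $e^{2\sigma t}|\nabla w|^2 = |\nabla v|^2$ and $e^{2\sigma t}|\pa_t w|^2 = |\pa_t v - \sigma v|^2$ (the cross term $\sigma \pa_t v\cdot v$ that appears being disposed of by one more integration by parts in $t$) gives the Carleman estimate for $\Box$. Finally, the first- and zeroth-order perturbations satisfy $|e^{\sigma t}(\L_{a,b,c} - \Box) w|^2 \cleq |\nabla_{x,t}v|^2 + \sigma^2 v^2$ pointwise, with constant depending only on $\|[a,b,c]\|_{C^0}$, and this error is absorbed by the $\sigma^2$-scale interior LHS for $\sigma$ large enough. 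Beyond overcoming the lack of pseudo-convexity, the remaining work is bookkeeping of normals, surface measures on $L_{\omega,\tau}$ and $H_{\omega,\tau}$, and routine AM--GM absorption.
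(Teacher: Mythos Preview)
Your argument is correct and complete; in fact it recovers the inequality at the stronger scale $\sigma^2$ in the interior rather than the $\sigma$ claimed in the statement. The paper takes a different route: it invokes a general Carleman estimate with explicit boundary terms (Theorem~A.7 of \cite{rakesh2019fixed}), observing that although $\phi=t$ is not strongly pseudo-convex, the key hypothesis (A.9) there still holds because for $p(\xi,\tau)=\tau^2-|\xi|^2$ the system $p(\xi,\tau)-\sigma^2 p(\nabla\phi,\phi_t)=0$, $\{p,\phi\}=0$ has no solution on the unit sphere; it then reads off the boundary contributions on $L_{\omega,\tau}$ and $H_{\omega,\tau}$ from the formulas in that reference. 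Your approach is the direct multiplier/energy computation the paper itself alludes to as a possible alternative: the conjugation $v=e^{\sigma t}w$, the self-adjoint/skew-adjoint splitting $P_\sigma=A+B$, and the device $\|Av\|^2\geq 2\sigma^2(Av,v)-\sigma^4\|v\|^2$ to extract interior positivity despite $[A,B]=0$. The advantage of your route is that it is entirely self-contained and makes transparent why the non-pseudo-convex weight $t$ still works (the $\|Bv\|^2=4\sigma^2\|v_t\|^2$ term compensates for the indefinite sign in $(\Box v,v)$); the paper's route has the advantage of outsourcing the bookkeeping to an already-proved general result and only having to analyze the boundary integrands.
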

%
%
\begin{proof}
This proposition could probably be proved by using energy estimates coming from standard multipliers
but we use Carleman estimates since we have already calculated the boundary terms in \cite{rakesh2019fixed} for a 
general situation. Below, we use the notation used for Theorem A.7 in \cite{rakesh2019fixed}.

We appeal to Theorem A.7 of \cite{rakesh2019fixed}. The hypothesis of Theorem  A.7 requires the strong pseudo-convexity of $\phi$ but the proof of Theorem A.7 just needs that the relation (A.9) (in Lemma A.6) holds. 
One can verify that (A.9) holds for the wave operator
and $\phi(x,t) = t$. In fact (A.9) holds because there are no {\em ``$(x,\xi,\sigma) \in \bar{\Omega} \times S$
with $p(x,\xi) - \sigma^2 p(x,\pa \phi)=0$ and $\{p,\phi\}(x,\xi)=0$''} as we show next. We have
$p(x,t,\xi,\tau) = \tau^2 - |\xi|^2$ and $\phi(x,t) =  t$. Hence 
\[
0 = \{p, \phi\}(x,t,\xi,\tau) = p_\tau \phi_t = \pm 2 \tau
\]
and
\[
0 = p(x,t,\xi,\tau) - \sigma^2 p(x,t, \nabla \phi, \phi_t) = \tau^2 - |\xi|^2 - \sigma^2
\]
imply $\tau=0, \xi=0, \sigma=0$, hence there are no points ``$(x,\xi,\sigma) \in \bar{\Omega} \times S$
with $p(x,\xi) - \sigma^2 p(x,\pa \phi)=0$ and $\{p,\phi\}(x,\xi)=0$''. Note $S$ represents the unit sphere.

The proposition will follow from an analysis of the boundary terms in the 
statement of Theorem A.7. The principal part of $\L_{a,b,c}$ is the wave operator and without loss of 
generality we assume that $\tau=0$, $x=(y,z)$ with $y \in \R^{n-1}$, $z \in \R$ and $\omega$ is the unit vector 
in the direction of the positive $z$ axis hence $L_{\omega, \tau}$ is the plane $t=z$.

The boundary term on $t=z$ has been computed in Subsection A.2 in \cite{rakesh2019fixed} and  is given by
\begin{align*}
\frac{1}{\sqrt{2}} \nu_j E_j & = (\phi_t - \phi_z)(u_z+u_t)^2 + (\phi_z + \phi_t) |u_y|^2 - 2(u_z+u_t)(u_y \cdot \phi_y) 
\\
& \qquad - \sigma^2(\phi_z+\phi_t)(|\phi_x|^2 - \phi_t^2)u^2 - (u_z+u_t) g(x,t) u.
\end{align*}
where $u = w e^{\sigma \phi}$ and $g$ is some smooth function independent of $\sigma$. 
Hence, on $t=z$ for $\phi=t$ we have $u = w e^{\sigma t}$ and 
\begin{align*}
\frac{1}{\sqrt{2}} \nu_j E_j & = (u_z+u_t)^2 + |u_y|^2 + \sigma^2u^2 - (u_z+u_t) g(x,t) u
\\
& \geq (u_z+u_t)^2 + |u_y|^2 + \sigma^2u^2 - \frac{1}{2} (u_z+u_t)^2 - k u^2
\\
& = \frac{1}{2} (u_z+u_t)^2 + |u_y|^2 + (\sigma^2 - k) u^2
\qquad \qquad \text{$k$ independent of $\sigma$}
\\
& \cgeq e^{2 \sigma t}  ( (w_z+w_t)^2  + |w_y|^2 + \sigma^2 w^2 )
\qquad \qquad \text{using a standard argument}
\end{align*}
for $\sigma$ large enough.

To get the boundary terms on $t=T$, we again go to the expressions in subsection A.2 on \cite{rakesh2019fixed}
for the wave operator. Here $\nu_x=0$ and $\nu_t = (0,0, \cdots,0,1)$, hence $\nu_j E_j =0$ for
$j=1, \cdots, n$ and
\[
\nu_t E_t = - \phi_t ( |u_x|^2 - u_t^2) + \sigma^2 \phi_t( |\phi_x|^2 - \phi_t^2)u^2 
+ 2u_t ( u_x \cdot \phi_x - u_t \phi_t) + g(x,t) u_t u.
\]
Hence, on $t=T$, for $\phi=t$ we have
\begin{align*}
\nu_t E_t & = - (|u_x|^2 - u_t^2) - \sigma^2 u^2 - 2 u_t^2 + g(x,t) u_t u
\\
& = - (|u_x|^2 + u_t^2) - \sigma^2 u^2 + g(x,t) u_t u
\\
& \cgeq - e^{2\sigma t} ( |\nabla_{x,t} w|^2 + \sigma^2 w^2)
\end{align*}
by a standard argument. 
The proposition now follows from (A.11) of Theorem A.7 in \cite{rakesh2019fixed}.
\end{proof}
%
\section{The forward problems}

\subsection{Proof of Proposition \ref{prop:heaviside}}
The existence, uniqueness and the regularity may be proved in a fashion similar to the proof of Proposition 1.1 in \cite{RS1}. The only part which is new is the progressing wave expansion which we show below. Below, $\L$ will mean $\L_{a,b,c}$.

We seek $U$ in the form
\[
U(\xtot) = u(\xtot)H(\ttxo)
\]
for some function $u(\xtot)$ defined on the region $t \geq \tau + x \cdot \omega$. To describe $u(\xtot)$ in detail, 
we work with the special case when $\tau=0$; the general $\tau$ result will be inferred easily from this special 
case. Below we denote $U(x,t;\omega,0), u(x,t;\omega,0)$ and $\alpha(x;\omega,0)$ by 
$U(x,t;\omega), u(x,t;\omega)$ and $\alpha(x;\omega)$. 

The initial condition and the speed of propagation force 
\[
u(\xto)=1, \qquad \text{when } t\ll 0.
\]
Also, observe that
\begin{align*}
(\pa_t -a) ( f(x,t) F(\txo) ) & = f F'(\txo) + ( (\pa_t -a) f)F(\txo)
\\
(\pa_t-a)^2( f(x,t) F(\txo) ) & = f F''(\txo) + 2( (\pa_t -a) f)F'(\txo) 
\\
& \qquad + ( (\pa_t -a)^2 f) F(\txo)
\\
(\nabla - b) ( f(x,t) F(\txo) ) & = - \omega f F'(\txo) + ( ( \nabla -b)f) F(\txo)
\\
( (\nabla -b)^2 ) F(\txo) ) & =  f F''(\txo) - 2 (\omega \cdot (\nabla - b) f) F'(\txo) 
\\
& \qquad + ( (\nabla -b)^2 f) F(\txo),
\end{align*}
so
\begin{align}
\L ( f(x,t) F(\txo) ) & = 2( f_t +  \omega \cdot \nabla f - (a+ \omega \cdot b)f) F'(\txo) 
+ (\L f) F(\txo).
\label{eq:LF}
\end{align}
Hence
\begin{align*}
\L U & =2 (u_t + \omega \cdot \nabla u - (a + \omega \cdot b) u ) \delta(\txo) + (\L u) H(\txo).
\end{align*}
This forces $\L u =0$ on the region $t \geq x \cdot \omega$ and, on $t = x \cdot \omega$, $u$ must satisfy the
transport equation
\[
(u_t + \omega \cdot \nabla u - (a + \omega  \cdot b) u)(x, x \cdot \omega ; \omega)  =0.
\]

Noting (\ref{eq:alpharel}) and that $\alpha(x,x \cdot \omega;\omega)=1$ for $x \cdot \omega <<0$ 
we see that
$ u(x , x \cdot \omega; \omega)  = \alpha(x, x \cdot \omega; \omega)$.
Hence 
\[
U(x,t;\xi, \tau) = u(\xtot) H(\ttxo)
\]
where $u(\xtot)$ is the solution of the characteristic initial value problem.

\subsection{Proof of Proposition \ref{prop:delta}}
The existence, uniqueness and the regularity may be proved in a fashion similar to the proof of Proposition 1.1 in \cite{RS1}. The only part which is new is the progressing wave expansion which we show below. Below, $\L$ will mean $\L_{a,b,c}$.

We seek $V$ in the form
\[
V(\xtot) = f(x,t; \omega, \tau) \delta(\ttxo) + v(\xtot) H(\ttxo)
\]
with $v(\xtot)$ supported in the region $t \geq \tau + x \cdot \omega$ and, for $t <<0$, we have
 $f(x,t;\omega,\tau)=1$ and $v(\xtot)=0$. There are many choices for 
 $f(x,t;\omega, \tau)$ but a unique choice for 
$f(x,\tau + x \cdot \omega; \omega, \tau)$.
To describe $V(\xtot)$ in detail we work with the special case when $\tau=0$; the general $\tau$ result will be 
inferred easily from this special case. Below we denote $V(x,t;\omega,0), f(x,; \omega,0)$ and 
$v(x,t;\omega,0)$  
by $V(x,t;\omega), f(x;\omega)$ and $v(x,t;\omega)$. 

We seek $V$ in the form
\[
V(\xto) = f(\xto) \delta(\txo) + v(\xto) H(\txo),
\]
hence, using (\ref{eq:LF}), 
\begin{align*}
(\L V)  (x,t;\omega) & =  2 (f_t + \omega \cdot \nabla f  - (a + \omega \cdot b) f)
(x,t; \omega) \delta'(\txo) 
\\
& \qquad + (2v_t + 2\omega \cdot \nabla v - 2(a + \omega \cdot b) v + \L f)
(x, x \cdot \omega;\omega) \delta(\txo)
\\
& \qquad +  (\L v)(x,t;\omega) H(\txo).
\end{align*}
Amongst the many choices for $f$ to zero out the first term in the above expansion of $\L V$,
we choose one for which
\[
f_t +  \omega \cdot \nabla f - (a+ \omega \cdot b)f =0, \qquad \text{on } \R^n \times \R.
\]
Hence we chose $f(x,t;\omega) = \alpha(x,t;\omega)$ so we must now require
\[
Lv =0 \qquad \text{on } t \geq  x \cdot \omega
\]
and, on $t=x \cdot \omega$, $v$ must satisfy the transport equation
\begin{align*}
2(v_t + \omega \cdot \nabla v - (a + \omega  \cdot b) v)(x, x \cdot \omega; \omega) & = 
- (\L \alpha)(x, x \cdot \omega; \omega),
 \qquad x \in \R^n.
\end{align*}

So for a general $\tau$, 
\[
V(x,t;\omega,\tau) = \alpha(x, t;\omega) \delta(\ttxo) + v(\xtot) H(\ttxo) 
\]
where $v(\xtot)$ is the solution of the characteristic IVP.

\section*{Acknowledgments} 
Krishnan is supported in part by India SERB Matrics Grant MTR/2017/000837. Rakesh's work was supported 
by the NSF grant DMS 1908391.

\bibliographystyle{plain}

\end{document}